\newtheorem{thm}{Theorem}[section]
\theoremstyle{definition}
\newtheorem{defin}[thm]{Definition}
\newtheorem{rem}[thm]{Remark}
\numberwithin{equation}{section}
 \newcommand{\setN}{\mathbb{N}}
\begin{document}

\baselineskip=17pt

\title[]
{Further results and examples for\\
formal mathematical systems\\ 
with structural induction}

\author{Matthias Kunik}
\address{Universit\"{a}t Magdeburg\\
IAN \\
Geb\"{a}ude 02 \\
Universit\"{a}tsplatz 2 \\
D-39106 Magdeburg \\
Germany}
\email{matthias.kunik@ovgu.de}

\date{\today}
\maketitle

\begin{abstract}
In the former article ``Formal mathematical systems including
a structural induction principle'' we have presented
a unified theory for formal mathematical systems
including recursive systems closely related to formal grammars, including 
the predi\-cate calculus as well as a formal induction principle.
In this paper we present some further results and examples 
in order to illustrate how this theory works.
\end{abstract}

{\bf Keywords:} Formal mathematical systems, elementary proof\\ theory,
languages and formal grammars, structural induction, \\
$\omega$-consistency of Peano arithmetic.\\

Mathematics Subject Classification: 03F03, 03B70, 03D03, 03D05\\

\section{Introduction}\label{intro}

In this article I refer to my former work \cite{Ku}, which is inspired by Hofstadter's book \cite{Ho} as well as by Smullyan's  
``Theory of formal systems" in \cite{Sm}.

The recursive systems introduced in \cite[Section 1]{Ku}
may be regarded as variants of formal grammars,
but they are better adapted for use in mathe\-matical logic and enable us 
to generate in a simple way the recursively enumerable relations 
between lists of terms over a finite alphabet, using the R-axioms and the
R-rules of inference. The R-axioms of a recursive system
are special quantifier-free positive horn formulas. 
In addition, the recursive system contains R-axioms for the use of equations. 
Three R-rules of inference provide the use of R-axioms, the Modus Ponens Rule and a 
simple substitution mechanism in order to obtain conclusions from the given R-axioms.

In Section \ref{recursive} of the paper on hand we present an example 
of a recursive system which represents the natural numbers in two different ways.
The recursive system generates a specific relation 
between the dual representation of any natural number $n$
and its representation as a tally $a^n=a\ldots a$ of length $n$ with the single symbol $a$.

In \cite[Section 3]{Ku} a general recursive system $S$ is embedded into a formal mathe\-matical
system $[M;{\mathcal L}]$ based on the predicate calculus and a formal induction principle.
The set ${\mathcal L}$ of restricted argument lists 
contains the variables and is closed with respect to substitutions. The embedding is consistent in the sense that the R-axioms of $S$ 
with argument lists in ${\mathcal L}$ will become special axioms of $[M;{\mathcal L}]$ 
and that the R-rules of inference
with substitutions of lists restricted to ${\mathcal L}$ will be special
rules of inference in $[M;{\mathcal L}]$. The formal structural induction in a mathe\-matical system
is performed with respect to the axioms of the underlying recursive system $S$, and 
the formal induction principle for the natural numbers is a special case.

The three examples in Section \ref{general_valid} make use of the axioms and rules
for managing formulas with quantifiers in the formal mathematical systems, and namely 
the first example is needed in Section \ref{question}.

In Section \ref{cf} we will present some technical results
concerning the substitution of variables in formulas,
because substitutions in formulas with quantifiers need special care.

An example with formal induction involving equations will be given
in Section \ref{induction}.
The underlying recursive system of $[M;{\mathcal L}]$ is simple and similar to that in 
Section \ref{recursive}, but its R-axioms contain equations, and the formulas which we will deduce
in $[M;{\mathcal L}]$ need more effort than it seems at a first glance.

In Section \ref{prime_formulas} we develop a simple procedure in order to eliminate certain prime formulas from formal proofs which do not occur with a given arity in the basis axioms of the mathematical system.

In \cite[Section 5]{Ku} we have stated Conjecture (5.4) which characterizes the provability
of variable-free prime formulas 
in special axio\-ma\-tized mathematical systems $[M;{\mathcal L}]$ whose basis-axioms
coincide with the basis R-axioms of their underlying recursive systems.
In Section \ref{pa} of the paper on hand we present a proof of this conjecture
via Theorem \ref{mainthm}. At least under a natural interpretation of the formulas
the theorem shows that the axioms and rules of inference
including the Induction Rule (e) from \cite[(3.13)(e)]{Ku}
correspond to correct methods of deduction. As a further application of Theorem \ref{mainthm} we will give a proof for the $\omega$-consistency of the Peano arithmetic,
see Theorem \ref{pa_omega_consistency}.

In Section \ref{question} we will come back to the recursive system $S$
from Section \ref{recursive} and will present another instructive example 
for the use of the Induction Rule (e).

\section{Recursive systems} \label{recursive}
For the preparation of this section we need \cite[(1.1)-(1.12)]{Ku}. There recursively enumerable relations
are defined. These are special relations between lists of symbols, and they are generated in a very simple way by three rules of inference, namely Rules (a), (b) and (c) given in (1.11). We start with the following example:

\subsection{Dual representation of natural numbers}\label{dual_recursive}\hfill\\
We consider the recursive system $S=[A;P;B]$ with
$A=[a;0;1]$, $P=[D]$, with distinct variables $x,y \in X$ and with $B$ consisting of the following six basis R-axioms:
\begin{itemize}
\item[($\alpha$)] $D\, 1$
\item[($\beta$)] $\to ~D\, x ~ D\, x0$
\item[($\gamma$)] $\to ~D\, x ~ D\, x1$\\
\item[($\delta$)] $D\, 1,a$
\item[($\varepsilon$)] $\to ~D\, x,y ~ D\, x0,yy$
\item[($\zeta$)] $\to ~D\, x,y ~ D\, x1,yya$
\end{itemize}
\noindent
The $1$-ary predicate $D\, x$ represents natural numbers $x$ in dual form.
The $2$-ary predicate $D\, x,y$ gives the dual representation $x$ 
of a natural number $y=a^n$, represented as a tally with the single symbol ``$a$''. Note that the predicate symbol 
``$D$'' is used $1$-ary as well as $2$-ary within the same
recursive system $S$, which has to be mentioned separately in each case.
\noindent
There results another recursive 
system $\tilde{S}$ if we replace the first three R-axioms by a single one
$\to ~D\, x,y ~ D\, x$.
The elementary prime R-formulas derivable  in $S$ and $\tilde{S}$ are the same. 
In Section \ref{question} we will come back to the recursive system $S$
and will present an instructive example for a mathematical system with formal induction.

\noindent
Now we present an R-derivation of the formula
$D\,101,aaaaa$ in the recursive system $S$. It means
that 5 (represented by $a^5=aaaaa$) has the dual representation $101$:
\begin{itemize}
\item[(1)] $D\, 1,a$ \quad Rule (a) and ($\delta$).
\item[(2)] $\to ~D\, x,y ~ D\, x0,yy$ 
\quad Rule (a) and ($\varepsilon$).
\item[(3)] $\to ~D\, x,y ~ D\, x1,yya$ 
\quad Rule (a) and ($\zeta$).
\item[(4)] $\to ~D\, 1,y ~ D\, 10,yy$ 
\quad Rule (c), (2) with $x=1$.
\item[(5)] $\to ~D\, 1,a ~ D\, 10,aa$ 
\quad Rule (c), (4) with $y=a$.
\item[(6)] $D\, 10,aa$ 
\quad Rule (b), (1) and (5).
\item[(7)] $\to ~D\, 10,y ~ D\, 101,yya$ 
\quad Rule (c), (3) with $x=10$.
\item[(8)] $\to ~D\, 10,aa ~ D\, 101,aaaaa$ 
\quad Rule (c), (7) with $y=aa$.
\item[(9)] $D\, 101,aaaaa$ 
\quad Rule (b), (6) and (8).
\end{itemize}

\section{Formal mathematical systems}\label{fms}
\noindent
For the preparation of this section we need \cite[(3.1)-(3.15)]{Ku}.
In \cite[Section 3]{Ku} a recursive system $S$ is embedded into a formal mathe\-matical
system $M$. This embedding is consistent in the sense that the R-axioms of $S$ will become 
special axioms of $M$ and that the R-rules of inference will be special
rules of inference in $M$. In \cite[(3.13)]{Ku} we use five rules of inference, namely Rules (a)-(e).
Rule (e) enables formal induction with respect to the recursively enumerable relations generated by the underlying recursive system $S$.\\ 

\noindent
In \cite[(3.15)]{Ku} formal mathe\-matical systems $[M;\mathcal{L}]$
with restrictions in the argument lists 
of the formulas are introduced. The set of restricted argument lists $\mathcal{L}$ contains the variables and is closed with respect to substitutions.

\subsection{Generally valid formulas with quantifiers}\label{general_valid}\hfill\\
\noindent
{\bf Example 1:} This is needed in Section \ref{question}. Let $F$ be a formula of a mathematical system $[M;\mathcal{L}]$ and let $x\in X$ be a variable. Then we obtain the following proof of the generally valid formula $\to F ~ \exists x\, F$ in $[M;\mathcal{L}]$, 
using the rules in \cite[(3.13)]{Ku}.
\begin{enumerate}
    \item $\to~\forall x \, \neg F ~ \neg F$ \qquad
    Rule (a), quantifier axiom (3.11)(a).
    \item $\to~\,\to~\forall x\,\neg F~\neg F ~\, \to F~\,
    \neg\;\forall x\,\neg F$\\ 
    Rule (a) with the identically true propositional function\\
    $\to~\to\,\xi_1\,\neg\xi_2 ~ \to\,\xi_2\,\neg\xi_1$ \,.
    \item $\to F~\,\neg\;\forall x\,\neg F$ 
    \qquad Rule (b), (1), (2).
    \item $\leftrightarrow ~ \neg \, \forall x \neg F ~ \exists x\, F$ 
    \qquad Rule (a), quantifier axiom (3.11)(c).
    \item $\to ~\, \to F ~\,\neg\forall x\,\neg F 
    ~\, \to ~\,
    \leftrightarrow ~ \neg \, \forall x \, \neg F ~ \exists x\, F
    ~\, \to ~F ~ \exists x\, F$\\
    Rule (a) with the identically true propositional function\\
    $\to ~ \to\,\xi_1\, \xi_2 ~ \to ~
    \leftrightarrow \, \xi_2 \, \xi_3 ~ \to\,\xi_1\,\xi_3$\,.
    \item $\to ~\,
    \leftrightarrow ~ \neg \, \forall x \, \neg F ~ \exists x\, F
    ~\, \to ~F ~ \exists x\, F$\\
    Rule (b), (3), (5).
    \item $\to F ~ \exists x\, F$ 
    \qquad Rule (b), (4), (6).
\end{enumerate}

\noindent\\
{\bf Example 2:} This example plays a crucial role in the proof of 
\cite[(3.17) Theorem]{Ku}.
Let $F, G$ be formulas of a mathematical system $[M;\mathcal{L}]$
and let $x \in X$ be a variable. Then we obtain the following proof of the generally valid formula
$\to ~ \forall \, x \to F G ~~\to~\forall \, x F ~ \forall \, x G$ in $[M;\mathcal{L}]$, using the rules in \cite[(3.13)]{Ku}.
  \begin{enumerate}
    \item $\to \, \forall \, x F  ~ F$ \qquad
    Rule (a), quantifier axiom (3.11)(a).
    \item $\rightarrow ~ \forall \, x  \to F G ~\to  F G$ \qquad
    Rule (a), quantifier axiom (3.11)(a).
    \item 
		\begin{equation*}
		\begin{split}
		  \to ~&~\to \, \forall \, x F  ~ F\\
			\to ~&~\to \, \forall \, x  \to F G ~\to  F G \\
				   &~\to \, \forall \, x  \to F G ~\to  \forall \, x F \, G \\		
		\end{split}
		\end{equation*}
    Rule (a) with the identically true propositional function
		\begin{equation*}
		\begin{split}
		  \to ~~&\to ~\xi_1\,~ \xi_2\\
      \to ~~&\to ~\xi_3 ~ \to ~ \xi_2\,~ \xi_4	\\	
			&\to ~\xi_3 ~ \to ~ \xi_1\,~ \xi_4	\\	
		\end{split}
		\end{equation*}
   and $\xi_1=\forall \, x F$; $\xi_2=F$;
	$\xi_3=\forall \, x  \to F G$; $\xi_4=G$.
	\item 
		\begin{equation*}
		\begin{split}
			\to ~&~\to \, \forall \, x  \to F G ~\to  F G \\
				   &~\to \, \forall \, x  \to F G ~\to  \forall \, x F \, G \\		
		\end{split}
		\end{equation*}
  Rule (b), (1), (3).
	\item $~\to \, \forall \, x  \to F G ~\to  \forall \, x F \, G$ \qquad Rule (b), (2), (4).
  \item $\forall \, x \, \to ~ \forall \, x \to F G ~\to~\forall \, x F ~ G$ \qquad Rule (d), (5).
		\item Next we apply Rule (a), quantifier axiom (3.11)(b):
		\begin{equation*}
		\begin{split}
		 \to ~&~\forall \, x \, \to ~ \forall \, x \to F G ~\to~\forall \, x F ~ G\\
			\to ~&~\forall \, x  \to F G ~\, \forall \, x\to~  \forall \, x\,F \, G \\		
		\end{split}
		\end{equation*}
		\item $\to ~\forall \, x  \to F G ~\, \forall \, x\to~  \forall \, x\,F \, G$ \qquad
		Rule (b), (6), (7).
		\item $\to ~\forall \, x  \to  \forall \, x\,F G ~ \to~  \forall \, x\,F ~\, \forall \, x\,G$\\
		Rule (a), quantifier axiom (3.11)(b).
    \item 
		\begin{equation*}
		\begin{split}
		  \to ~&~\to ~\forall \, x  \to F G ~\, \forall \, x\to~  \forall \, x\,F \, G\\
			\to ~&~\to ~\forall \, x  \to  \forall \, x\,F G ~ \to~  \forall \, x\,F ~\, \forall \, x\,G\\
				   &~\to \, \forall \, x  \to F G ~\to  \forall \, x \, F ~\forall \, x \, G \\		
		\end{split}
		\end{equation*}
    Rule (a) with the identically true propositional function
		\begin{equation*}
		\begin{split}
		  \to ~~&\to ~\xi_1 \,\xi_2\\
      \to ~~&\to ~\xi_2 ~ \to ~ \xi_3\,~ \xi_4\\
				&\to ~\xi_1 ~ \to ~ \xi_3\,~ \xi_4\\
		\end{split}
		\end{equation*}
   and $\xi_1=\forall \, x  \to F G $; $\xi_2=\, \forall \, x\to~  \forall \, x\,F \, G$;
	$\xi_3=\forall \, x\,F$; $\xi_4=\forall \, x \,G$.
    \item 
		\begin{equation*}
		\begin{split}
			\to ~&~\to ~\forall \, x  \to  \forall \, x\,F G ~ \to~  \forall \, x\,F ~\, \forall \, x\,G\\
				   &~\to \, \forall \, x  \to F G ~\to  \forall \, x \, F ~\forall \, x \, G \\		
		\end{split}
		\end{equation*}
    Rule (b), (8), (10).
		 \item 
		$~\to \, \forall \, x  \to F G ~\to  \forall \, x \, F ~\forall \, x \, G $ \qquad
    Rule (b), (9), (11).
\end{enumerate}

The formulas in Example 1 and Example 2 are generally valid 
because we have only used non-basis axioms and the rules of inference.\\

\noindent
{\bf Example 3:} We show that $x\not\in \mbox{free}(F)$ is an essential restriction for
the quantifier axiom
~$\to ~ \forall x\to F\,G
    ~ \to F\,~\, \forall x G$~ in \cite[(3.11)(b)]{Ku}. 
Let $a\in A$ be a constant and put $F=G=~\sim x,a$ with $x\in \mbox{free}(F)$. 
Ignoring the condition $x\not\in \mbox{free}(F)$  would give the invalid ``proof''\\

\begin{enumerate}
    \item $\to ~\, \forall x\, \to \, \sim x,a \, \sim x,a
    ~\, \to ~\,\sim x,a ~~\forall x \, \sim x,a$\\ 
    incorrect use of \cite[(3.11)(b)]{Ku}.
    \item $\to \, \sim x,a \, \sim x,a$ 
    \qquad Rule (a), since $\to\xi_1\;\xi_1$ is identically true.
    \item $\forall x \, \to \, \sim x,a \, \sim x,a$ 
    \qquad Rule (d), (2).
    \item $\to ~\,\sim x,a ~~\forall x \, \sim x,a$ 
    \qquad Rule (b), (1), (3).
    \item $\to ~\,\sim a,a ~~\forall x \, \sim x,a$
    \qquad Rule (c), (4) with $x=a$. 
      \item $\sim x,x$ \qquad Rule (a), axiom of equality.
    \item $\sim a,a$ \qquad Rule (c), (6) with $x=a$.
    \item $\forall x \, \sim x,a$
    \qquad Rule (b), (5), (7). 
\end{enumerate}
In general the result $\forall x \, \sim x,a$ is false.\\

\subsection{Collision-free substitutions in formulas}\label{cf}\hfill\\
\noindent
Let $[M;\mathcal{L}]$ be a mathematical system with restricted argument lists in $\mathcal{L}$ ,
let $F$, $G$ be formulas in $[M;\mathcal{L}]$ and $x,y,z \in X$.
We make especially use of \cite[(3.1)-(3.7)]{Ku} and want to present 
the whole proof of \cite[Lemma (3.16)(a)]{Ku} with technical details.
This is necessary because substitutions in formulas with quantifiers need special care.
The lemma states that we have for all $x, z \in X$ with $z \notin \mbox{var}(F)$:
\begin{itemize}
\item[(i)] $\mbox{CF}(F;z;x)$ and
\item[(ii)] $\mbox{CF}(F\frac{z}{x};x;z)$ and
\item[(iii)] $F\frac{z}{x}\frac{x}{z} = F$\,.
\end{itemize}
\begin{proof}
We say that a formula $F$ in $[M;\mathcal{L}]$ satisfies the condition $(\star)$
if we have for all $x \in X\setminus \mbox{free}(F)$ and for all $\mu \in \mathcal{L}$:
\begin{equation*}
\mbox{CF}(F;\mu;x) \qquad \mbox{and} \qquad F\frac{\mu}{x}= F\,.
\end{equation*}
We first use induction on well-formed formulas to show that condition $(\star)$
is satisfied for all formulas $F$ in $[M;\mathcal{L}]$. Afterwards we prove
\cite[Lemma (3.16)(a)]{Ku}. 
\begin{itemize}
\item[(a)] From \cite[(3.7)(a)]{Ku} and \cite[(3.5)]{Ku}, 
\cite[(3.6)(a)]{Ku} we see that 
condition $(\star)$ is satisfied for all prime formulas $F$.
\item[(b)]  If $(\star)$ is satisfied for a formula $F$,
then also for the formula $\neg F$ due to \cite[(3.7)(b)]{Ku} and 
\cite[(3.6)(b)]{Ku}.
\item[(c)]  Next we assume that $F$ and $G$ both satisfy condition $(\star)$ 
(induction hypothesis) and put
$H=J \, F G$ for $J \in \{\to\,;\,\leftrightarrow\,;\,\&\,;\,\vee\}$.
Let $x \in X \setminus \mbox{free}(H)$, $\mu \in \mathcal{L}$.
From $\mbox{free}(H)= \mbox{free}(F) \cup \mbox{free}(G)$ we obtain that
$x \in X \setminus \mbox{free}(F)$ as well as $x \in X \setminus \mbox{free}(G)$.
We conclude from our induction hypothesis and \cite[(3.7)(c)]{Ku} that
$\mbox{CF}(F;\mu;x)$, $\mbox{CF}(G;\mu;x)$ and $\mbox{CF}(H;\mu;x)$.
Next we obtain from our induction hypothesis and \cite[(3.6)(c)]{Ku} that
\begin{equation*}
\begin{split}
\mbox{SbF}(H;\mu;x)&=\mbox{SbF}(J \, FG;\mu;x)\\
&= J \, \mbox{SbF}(F;\mu;x) \, \mbox{SbF}(G;\mu;x)\\ 
&= J \, F G = H\,.\\
\end{split}
\end{equation*}
\item[(d)]  Assume that $F$ satisfies condition $(\star)$ (induction hypothesis) 
and put $H=Q y \, F$ for $Q \in \{\forall\,;\,\exists\}$.
Let $x \in X \setminus \mbox{free}(H)$, $\mu \in \mathcal{L}$
and note that $$\mbox{free}(H)= \mbox{free}(F) \setminus \{y\}\,.$$
We have $\mbox{CF}(H;\mu;x)$ immediately from \cite[(3.7)(d)]{Ku}
and obtain that $x \in X \setminus \mbox{free}(F)$ or $x=y$.
For the substitution we distinguish two cases 
according to \cite[(3.6)(d)]{Ku}:

\noindent
Case 1: $x=y$. Then
$$
\mbox{SbF}(H;\mu;x)=\mbox{SbF}(Q y \, F;\mu;x)=Q y \, F=H\,.
$$
Case 2: $x \neq y$ and $x \in X \setminus \mbox{free}(F)$. Then
we obtain from our induction hypothesis that
$$
\mbox{SbF}(H;\mu;x)=\mbox{SbF}(Q y \, F;\mu;x)=Q y \, \mbox{SbF}(F;\mu;x)
= Q y \, F=H\,.
$$
\end{itemize}
We have shown that condition $(\star)$ is valid for \textit{all} 
formulas $F$ in $[M;\mathcal{L}]$. \\

\noindent
We say that a formula $F$ in $[M;\mathcal{L}]$ satisfies the condition $(\star \star)$
if it satisfies (i), (ii) and (iii) for all $x, z \in X$ with $z \notin \mbox{var}(F)$.
For the proof of \cite[Lemma (3.16)(a)]{Ku} we use induction on well-formed formulas to show that condition $(\star \star)$ is satisfied for all formulas $F$ in $[M;\mathcal{L}]$. 

\begin{itemize}
\item[(a)] From \cite[(3.7)(a)]{Ku} and \cite[(3.5)]{Ku}, 
\cite[(3.6)(a)]{Ku} we see that 
condition $(\star \star)$ is satisfied for all prime formulas $F$.
\item[(b)]  If $(\star \star)$ is satisfied for a formula $F$,
then also for the formula $\neg F$ due to \cite[(3.7)(b)]{Ku} 
and \cite[(3.6)(b)]{Ku}.
\item[(c)]  Assume that $F$ and $G$ both satisfy condition $(\star \star)$ 
(induction hypothesis) and put $H = J\,FG$ for $J \in \{\to\,;\,\leftrightarrow\,;\,\&\,;\,\vee\}$.
Let $x, z \in X$ with $z \notin \mbox{var}(H)=\mbox{var}(F) \cup \mbox{var}(G)$.
We have
$$z \notin \mbox{var}(F)\,, \quad z \notin \mbox{var}(G)$$
and conclude from our induction hypothesis and 
\cite[(3.7)(c)]{Ku}, \cite[(3.6)(c)]{Ku} that
\begin{itemize}
\item[(i)$'$] $\mbox{CF}(F;z;x)$, $\mbox{CF}(G;z;x)$ and hence $\mbox{CF}(H;z;x)$\,,
\item[(ii)$'$] $\mbox{CF}(F\frac{z}{x};x;z)$, $\mbox{CF}(G\frac{z}{x};x;z)$ 
and hence $\mbox{CF}(H\frac{z}{x};x;z)$\,,
\item[(iii)$'$] $H\frac{z}{x}\frac{x}{z}= 
J\,\mbox{SbF}(F\frac{z}{x};x;z)\,\mbox{SbF}(G\frac{z}{x};x;z)=J\,FG=H\,.$
\end{itemize}
\item[(d)]  Assume that $F$ satisfies condition $(\star \star)$ (induction hypothesis) 
and put $H=Q y \, F$ for $Q \in \{\forall\,;\,\exists\}$.
Let $x, z \in X$, $z \notin \mbox{var}(H)$ and
note that $z \notin \mbox{var}(F) \cup \{y\}$\,, especially $z \neq y$.

\noindent
Case 1: We suppose that $x \notin \mbox{free}(H)$\,.
Since $H$ satisfies the former condition $(\star)$ we obtain that
$\mbox{CF}(H;z;x)$, $H\frac{z}{x}=H$, and from $z \notin \mbox{var}(H)$ that
$\mbox{CF}(H\frac{z}{x};x;z)$ as well as $H\frac{z}{x}\frac{x}{z}= H\frac{x}{z}=H$.
Hence $H$ satisfies condition $(\star\star)$ in case 1.

\noindent
Case 2: We suppose that $x \in \mbox{free}(H)=\mbox{free}(F)\setminus \{y\}$\,.
We have $\mbox{CF}(F;z;x)$ from our induction hypothesis, recall that 
$y \neq z$ and conclude $\mbox{CF}(H;z;x)$ from \cite[(3.7)(d)ii)]{Ku}.
Next we use that $y \neq x$ and have from the induction hypothesis
$$H\frac{z}{x}=Q y\,F\frac{z}{x} \quad \mbox{and} \quad  \mbox{CF}(F\frac{z}{x};x;z)\,.$$
We obtain $\mbox{CF}(H\frac{z}{x};x;z)$ from \cite[(3.7)(d)ii)]{Ku} and see
$$  
\mbox{SbF}(H\frac{z}{x}; x; z) =Q y\,\mbox{SbF}(F\frac{z}{x};x;z)=Q y\,F=H
$$
from $z \neq y$ and the induction hypothesis.
Hence $H$ satisfies condition $(\star\star)$ in case 2.
\end{itemize}
We have shown that condition $(\star \star)$ is valid for \textit{all} 
formulas $F$ in $[M;\mathcal{L}]$. 
\end{proof}

\subsection{An example with formal induction and equations}
\label{induction}\hfill\\
\noindent
With 
$A_S := [\,a\,;\,b\,; \, f \,]$ and
$P_S := [\,W\,]$ 
we define a recursive system $S=[A_S;P_S;B_S]$ by the following list $B_S$
of basis R-axioms, where $x, y,s,t,u,v \in X$ are distinct variables:\\

\begin{tabular}{llll}
(1)  & $W\,a$ &&\\ 
(2)  & $W\,b$ &&\\
(3)  & $\to ~ W\,x ~ \to ~ W\,y\,~ W\,xy$ &&\\
(4)  & $\sim\,f(a),a$ &&\\  
(5)  & $\sim\,f(b),b$ &&\\
(6)  & $\to ~ W\,x ~ \to ~ W\,y\,~ \sim\,f(xy),f(y)f(x)$\,. &&\\
\end{tabular}

\noindent\\
The strings consisting of the symbols $a$ and $b$ are 
generated by the R-axioms (1)-(3). They are indicated by the
predicate symbol $W$, which is used only 1-ary here,
whereas $f$ denotes the operation which reverses the order
of such a string. For example, $\sim\,f(abaab),baaba$ is R-derivable, 
and equations like $\sim\,f(abaab),f(aab)ba$ and R-formulas
like \, $W\,f(aab)ba$ are also R-derivable.
The R-formula
\begin{equation}\label{reverse}
\to \,W x~\sim\,f(f(x)),x \tag{$\star$}
\end{equation} 
is not R-derivable in $S$.
But we will show that the latter formula is provable in the
mathematical system $[M;\mathcal{L}]$
with $M=[S;A_S;P_S;B_S]$ and the set $\mathcal{L}$
generated by the following rules:
\begin{itemize}
\item[(i)] $x \in \mathcal{L}$ for all $x \in X$,
\item[(ii)] $a \in \mathcal{L}$ and $b \in \mathcal{L}$,
\item[(iii)] If $\lambda, \mu \in \mathcal{L}$ then 
$\lambda \mu \in \mathcal{L}$,
\item[(iv)] If $\lambda \in \mathcal{L}$ then $f(\lambda) \in \mathcal{L}$.
\end{itemize}
The R-axioms (1)-(6) also form a proof in the mathematical system 
$[M;\mathcal{L}]$ which is extended to the following
proof in $[M;\mathcal{L}]$:
\begin{itemize}
    \item[(7)] $\sim x,x$ \qquad Rule (a), axiom of equality.
		\item[(8)] $\sim f(s),f(s)$ \qquad Rule (c), (7) 
		with $x=f(s)$. 
    \item[(9)] $\to~\sim f(s),f(s) ~\to~\sim s,t ~\sim f(s),f(t)$\\
    \qquad Rule (a), axiom of equality.
    \item[(10)] $\to~\sim s,t ~\sim f(s),f(t)$
    \qquad Rule (b), (8), (9).
    \item[(11)] $\to~\sim f(a),t ~\sim f(f(a)),f(t)$
    \qquad Rule (c), (10) with $s=f(a)$.
	  \item[(12)] $\to~\sim f(a),a ~\sim f(f(a)),f(a)$
    \qquad Rule (c), (11) with $t=a$.
		\item[(13)] $\sim f(f(a)),f(a)$ \qquad Rule (b), (4), (12).
	  \item[(14)] $\to~\sim s,t~\to~\sim t,u~\sim s,u$\\
    \qquad Rule (a), axiom of equality.
	  \item[(15)] $\to~\sim f(f(a)),t~\to~\sim t,u~\sim f(f(a)),u$\\
    \qquad Rule (c), (14) with $s=f(f(a))$.
	  \item[(16)] $\to~\sim f(f(a)),f(a)~\to~\sim f(a),u~\sim f(f(a)),u$\\
    \qquad Rule (c), (15) with $t=f(a)$.
	  \item[(17)] $\to~\sim f(f(a)),f(a)~\to~\sim f(a),a~\sim f(f(a)),a$\\
    \qquad Rule (c), (16) with $u=a$.
	  \item[(18)] $\to~\sim f(a),a~\sim f(f(a)),a$ 
		\qquad Rule (b), (13), (17).
	  \item[(19)] $\sim f(f(a)),a$ \qquad Rule (b), (4), (18).
		\item[(20)] $\to~\sim f(b),t ~\sim f(f(b)),f(t)$
    \qquad Rule (c), (10) with $s=f(b)$.
	  \item[(21)] $\to~\sim f(b),b ~\sim f(f(b)),f(b)$
    \qquad Rule (c), (20) with $t=b$.
		\item[(22)] $\sim f(f(b)),f(b)$ \qquad Rule (b), (5), (21).
	  \item[(23)] $\to~\sim f(f(b)),t~\to~\sim t,u~\sim f(f(b)),u$\\
    \qquad Rule (c), (14) with $s=f(f(b))$.
	  \item[(24)] $\to~\sim f(f(b)),f(b)~\to~\sim f(b),u~\sim f(f(b)),u$\\
    \qquad Rule (c), (23) with $t=f(b)$.
	  \item[(25)] $\to~\sim f(f(b)),f(b)~\to~\sim f(b),b~\sim f(f(b)),b$\\
    \qquad Rule (c), (24) with $u=b$.
	  \item[(26)] $\to~\sim f(b),b~\sim f(f(b)),b$ 
		\qquad Rule (b), (22), (25).
	  \item[(27)] $\sim f(f(b)),b$ \qquad Rule (b), (5), (26).
	  \item[(28)] $\to ~\sim s,s ~ \to ~\sim s,t ~\sim t,s$ 
		\qquad Rule (a), axiom of equality.
	  \item[(29)] $\sim s,s$ \qquad Rule (a), axiom of equality.
	  \item[(30)] $~ \to ~\sim s,t ~\sim t,s$ \qquad Rule (b), (28), (29).
	  \item[(31)] $~ \to ~\sim f(a),t ~\sim t,f(a)$ 
		\qquad Rule (c), (30) with $s=f(a)$.
	  \item[(32)] $~ \to ~\sim f(a),a ~\sim a,f(a)$ 
		\qquad Rule (c), (31) with $t=a$.
	  \item[(33)] $\sim a,f(a)$ 
		\qquad Rule (b), (4), (32).
	  \item[(34)] $~ \to ~\sim f(b),t ~\sim t,f(b)$ 
		\qquad Rule (c), (30) with $s=f(b)$.
	  \item[(35)] $~ \to ~\sim f(b),b ~\sim b,f(b)$ 
		\qquad Rule (c), (34) with $t=b$.
	  \item[(36)] $\sim b,f(b)$ 
		\qquad Rule (b), (5), (35).
	  \item[(37)] $\to ~\sim s,t ~ \to ~W s ~W t$ 
		\qquad Rule (a), axiom of equality.
	  \item[(38)] $\to ~\sim a,t ~ \to ~W a ~W t$ 
		\qquad Rule (c), (37) with $s=a$.
	  \item[(39)] $\to ~\sim a,f(a) ~ \to ~W a ~W f(a)$ 
		\qquad Rule (c), (38) with $t=f(a)$.
	  \item[(40)] $\to ~W a ~W f(a)$ 
		\qquad Rule (b), (33), (39).
	  \item[(41)] $W f(a)$ \qquad Rule (b), (1), (40).
	  \item[(42)] $\to ~\sim b,t ~ \to ~W b ~W t$ 
		\qquad Rule (c), (37) with $s=b$.
	  \item[(43)] $\to ~\sim b,f(b) ~ \to ~W b ~W f(b)$ 
		\qquad Rule (c), (42) with $t=f(b)$.
	  \item[(44)] $\to ~W b ~W f(b)$ 
		\qquad Rule (b), (36), (43).
	  \item[(45)] $W f(b)$ \qquad Rule (b), (2), (44).
		\item[(46)] $\to W a \to W f(a) \to ~ \sim f(f(a)),a 
		   ~\& ~\& \, W a ~ W f(a) ~\sim f(f(a)),a$\\ 
		Rule (a), axiom of the propositional calculus.
		\item[(47)] $\to W f(a) \to ~ \sim f(f(a)),a 
		~\& ~\& \, W a ~ W f(a) ~\sim f(f(a)),a$\\ 
		Rule (b), (1), (46).
		\item[(48)] $\to ~ \sim f(f(a)),a ~ \& ~\& \, W a ~ W f(a) ~
		\sim f(f(a)),a$\\ Rule (b), (41), (47).
		\item[(49)] $\& ~\& \, W a ~ W f(a) ~
		\sim f(f(a)),a$ \qquad Rule (b), (19), (48).
		\item[(50)] $\to W b \to W f(b) \to ~ \sim f(f(b)),b 
		~\& ~\& \, W b ~ W f(b) ~
		\sim f(f(b)),b$\\ 
		Rule (a), axiom of the propositional calculus.
		\item[(51)] $\to W f(b) \to ~ \sim f(f(b)),b 
		~\& ~\& \, W b ~ W f(b) ~\sim f(f(b)),b$\\ 
		Rule (b), (2), (50).
		\item[(52)] $\to ~ \sim f(f(b)),b ~ \& ~\& \, W b ~ W f(b) ~
		\sim f(f(b)),b$\\ Rule (b), (45), (51).
		\item[(53)] $\& ~\& \, W b ~ W f(b) ~
		\sim f(f(b)),b$ \qquad Rule (b), (27), (52).
		\end{itemize}
		
		\noindent
		At this place we stop the proof in the mathematical system 
		$[M;\mathcal{L}]$, introduce two different 
		and new constant symbols $c$, $d$
		not occurring in $[M;\mathcal{L}]$ and define the extension
		$A := [\,a\,;\,b\,; \, f \,;c\,;d\,]$ of the alphabet $A_S$.
		With $M_A:=[S;\,A\,;\,P_S\,;B_S\,]$ and
		\[ {\mathcal L}' := 
		\{\, \lambda \frac{t_1}{x_1}...\frac{t_m}{x_m} \, : \,
		\lambda \in \mathcal{L},\, x_1,\ldots,x_m \in X,\,
		t_1,\ldots,t_m \in \{c,d\}\,,\,
		m \geq 0 \,\} \]
		there results the mathematical system $[M_A;\mathcal{L}']$ 
		due to \cite[Definition (4.2)(d)]{Ku} and \cite[Corollary (4.9)(a)]{Ku}.
		Next we make use of the abbreviation
		\begin{equation*}
		G(\lambda) := \& ~\& \, W \lambda ~ W f(\lambda) ~
		\sim f(f(\lambda)),\lambda \quad \mbox{with~}
		\lambda \in {\mathcal L}'
		\end{equation*}
    and adjoin to $[M_A;\mathcal{L}']$ the two statements
		\begin{equation}\label{star2}
		  \varphi_1 := G(c)\,, \quad
			\varphi_2 := G(d)\,.   \tag{$\star \star$}
		\end{equation}
    There results the extended 
		mathematical system $[M';\mathcal{L}']$ with 
		$$M' := M_A(\{\varphi_1,\varphi_2\})
		=[S;\,A\,;P_S\,;\,B_S \cup \{\varphi_1,\varphi_2\}\,]$$
		due to \cite[Definition (4.2)(b)]{Ku}. Note that the
		abbreviations $G(\lambda)$, $\varphi_1$, $\varphi_2$ 
		are not part of the formal system.
    We keep in mind that any proof in $[M;\mathcal{L}]$ 
		is also a proof in $[M';\mathcal{L}']$
		and that the mathematical systems $[M;\mathcal{L}]$,
		$[M_A;\mathcal{L}']$ and $[M';\mathcal{L}']$ 
		all have the same underlying recursive system $S$.
		Hence (1)-(53) also constitutes a proof in $[M';\mathcal{L}']$,
		and we extend it to the following proof of the formula
		$G(cd)$ in $[M';\mathcal{L}']$:	
		\begin{itemize}
		\item[(54)] $G(c)$\qquad Rule (a) with axiom $\varphi_1=G(c)$.
		\item[(55)] $G(d)$\qquad Rule (a) with axiom $\varphi_2=G(d)$.
		\item[(56)] $\to G(c)~ W\,c$\\
		Rule (a), axiom of the propositional calculus.
		\item[(57)] $\to G(c)~ W\,f(c)$\\
		Rule (a), axiom of the propositional calculus.
		\item[(58)] $\to G(c)~ \sim f(f(c)),c$\\
		Rule (a), axiom of the propositional calculus.
		\item[(59)] $\to G(d)~ W\,d$\\ 
		Rule (a), axiom of the propositional calculus.
		\item[(60)] $\to G(d)~ W\,f(d)$\\
		Rule (a), axiom of the propositional calculus.
		\item[(61)] $\to G(d)~ \sim f(f(d)),d$\\
		Rule (a), axiom of the propositional calculus.
    \item[(62)] $W\,c$\qquad Rule (b), (54), (56).
    \item[(63)] $W\,d$\qquad Rule (b), (55), (59).
    \item[(64)] $W\,f(c)$\qquad Rule (b), (54), (57).
    \item[(65)] $W\,f(d)$\qquad Rule (b), (55), (60).
    \item[(66)] $\sim f(f(c)),c$\qquad Rule (b), (54), (58).
    \item[(67)] $\sim f(f(d)),d$\qquad Rule (b), (55), (61).	
		\item[(68)] $\to ~ W\,c ~ \to ~ W\,y\,~ W\,cy$ 
		\qquad Rule (c), (3) with $x=c$.
		\item[(69)] $\to ~ W\,c ~ \to ~ W\,d\,~ W\,cd$ 
		\qquad Rule (c), (68) with $y=d$.
		\item[(70)] $\to ~ W\,d\,~ W\,cd$ \qquad Rule (b), (62), (69).
		\item[(71)] $W\,cd$ \qquad Rule (b), (63), (70).
		\item[(72)] $\to ~ W\,f(d) ~ \to ~ W\,y\,~ W\,f(d)y$ 
		\qquad Rule (c), (3) with $x=f(d)$.
		\item[(73)] $\to ~ W\,f(d) ~ \to ~ W\,f(c)\,~ W\,f(d)f(c)$\\ 
		Rule (c), (72) with $y=f(c)$.
		\item[(74)] $\to ~ W\,f(c)\,~ W\,f(d)f(c)$ \qquad Rule (b), (65), (73).
		\item[(75)] $W\,f(d)f(c)$ \qquad Rule (b), (64), (74).
		\item[(76)] $\to ~ W\,c ~ \to ~ W\,y\,~ \sim\,f(cy),f(y)f(c)$\\
		Rule (c), (6) with $x=c$.
		\item[(77)] $\to ~ W\,c ~ \to ~ W\,d\,~ \sim\,f(cd),f(d)f(c)$\\ 
		Rule (c), (76) with $y=d$.
		\item[(78)] $\to ~ W\,d\,~ \sim\,f(cd),f(d)f(c)$ 
		\qquad Rule (b), (62), (77).
		\item[(79)] $\sim\,f(cd),f(d)f(c)$ \qquad Rule (b), (63), (78).
	  \item[(80)] $\to ~\sim f(cd),t ~\sim t,f(cd)$\\
		Rule (c), (30) with $s=f(cd)$.
	  \item[(81)] $~ \to ~\sim f(cd),f(d)f(c) ~\sim f(d)f(c),f(cd)$\\
		Rule (c), (80) with $t=f(d)f(c)$.
	  \item[(82)] $\sim f(d)f(c),f(cd)$ \qquad Rule (b), (79), (81).
	  \item[(83)] $\to~ \sim f(d)f(c),t \to~ W\,f(d)f(c)~W\,t$\\
		\qquad Rule (c), (37) with $s=f(d)f(c)$.
		\item[(84)] $\to~ \sim f(d)f(c),f(cd) \to~ W\,f(d)f(c)~W\,f(cd)$\\
		\qquad Rule (c), (83) with $t=f(cd)$.
	  \item[(85)] $\to~ W\,f(d)f(c)~W\,f(cd)$ \qquad Rule (b), (82), (84).
	  \item[(86)] $W\,f(cd)$ \qquad Rule (b), (75), (85).
		\item[(87)] $\to ~\sim st,st ~ \to ~ \sim t,v\,~ 
		\sim\,st,sv$ \qquad Rule (a), axiom of equality.
		\item[(88)] $\sim\,st,st$ \qquad Rule (c), (7) with $x=st$.
		\item[(89)] $\to ~ \sim t,v\,~ 
		\sim\,st,sv$ \qquad Rule (b), (87), (88).		
		\item[(90)] $\to ~\sim st,sv ~ \to ~ \sim s,u\,~ 
		\sim\,st,uv$\qquad Rule (a), axiom of equality.
		\item[(91)]		
		\begin{equation*}
		\begin{split}
		  \to ~~&\to ~\sim t,v\,~ \sim\,st,sv\\
      \to ~~&\to ~\sim st,sv ~ \to ~ \sim s,u\,~ \sim\,st,uv	\\	
			&\to ~\sim s,u ~ \to ~ \sim t,v\,~ \sim\,st,uv	\\	
		\end{split}
		\end{equation*}
    Rule (a) with the identically true propositional function
		\begin{equation*}
		\begin{split}
		  \to ~~&\to ~\xi_1\,~ \xi_2\\
      \to ~~&\to ~\xi_2 ~ \to ~ \xi_3\,~ \xi_4	\\	
			&\to ~\xi_3 ~ \to ~ \xi_1\,~ \xi_4	\\	
		\end{split}
		\end{equation*}
   and $\xi_1=~\sim t,v$; $\xi_2=~\sim st,sv$;
	$\xi_3=~\sim s,u$; $\xi_4=~\sim st,uv$.
	\item[(92)]		
		\begin{equation*}
		\begin{split}
      \to ~~&\to ~\sim st,sv ~ \to ~ \sim s,u\,~ \sim\,st,uv	\\	
			&\to ~\sim s,u ~ \to ~ \sim t,v\,~ \sim\,st,uv	\\	
		\end{split}
		\end{equation*}
    Rule (b), (89), (91).
		\item[(93)]$\to ~\sim s,u ~ \to ~ \sim t,v\,~ \sim\,st,uv$
		\qquad	Rule (b), (90), (92).
	  \item[(94)] $\to ~ W\,f(d) ~ \to ~ W\,y\,~ 
		\sim\,f(f(d)y),f(y)f(f(d))$\\
    Rule (c), (6) with $x=f(d)$.
	  \item[(95)] $\to ~ W\,f(d) ~ \to ~ W\,f(c)\,~ 
		\sim\,f(f(d)f(c)),f(f(c))f(f(d))$\\
    Rule (c), (94) with $y=f(c)$.
		\item[(96)] $\to ~ W\,f(c)\,~ \sim\,f(f(d)f(c)),f(f(c))f(f(d))$\\
    Rule (b), (65), (95).
		\item[(97)] $\sim\,f(f(d)f(c)),f(f(c))f(f(d))$\\
    Rule (b), (64), (96).
		\item[(98)]$\to ~\sim f(f(c)),u ~ \to ~ \sim t,v\,~ \sim\,f(f(c))t,uv$\\
		Rule (c), (93) with $s=f(f(c))$.
		\item[(99)]$\to ~\sim f(f(c)),c ~ \to ~ \sim t,v\,~ \sim\,f(f(c))t,cv$\\
		Rule (c), (98) with $u=c$.
		\item[(100)]$\to ~\sim f(f(c)),c ~ \to ~ \sim f(f(d)),v\,~ \sim\,f(f(c))f(f(d)),cv$\\
		Rule (c), (99) with $t=f(f(d))$.
		\item[(101)]$\to ~\sim f(f(c)),c ~ \to ~ \sim f(f(d)),d\,~ \sim\,f(f(c))f(f(d)),cd$\\
		Rule (c), (100) with $v=d$.
		\item[(102)]$\to ~ \sim f(f(d)),d\,~ \sim\,f(f(c))f(f(d)),cd$\\
		Rule (b), (66), (101).
		\item[(103)]$\sim\,f(f(c))f(f(d)),cd$\\
		Rule (b), (67), (102).
	  \item[(104)] $\to~\sim f(f(d)f(c)),t~\to~\sim t,u~\sim f(f(d)f(c)),u$\\
    \qquad Rule (c), (14) with $s=f(f(d)f(c))$.
	  \item[(105)] $\to~\sim f(f(d)f(c)),f(f(c))f(f(d))~\to~\sim f(f(c))f(f(d)),u\\
		\sim f(f(d)f(c)),u$ \qquad Rule (c), (104) with $t=f(f(c))f(f(d))$.
	  \item[(106)] $\to~\sim f(f(d)f(c)),f(f(c))f(f(d))~\to~\sim f(f(c))f(f(d)),cd\\
		\sim f(f(d)f(c)),cd$ \qquad Rule (c), (105) with $u=cd$.
	  \item[(107)] $\to~\sim f(f(c))f(f(d)),cd~\sim f(f(d)f(c)),cd$\\
    \qquad Rule (b), (97), (106).
	  \item[(108)] $\sim f(f(d)f(c)),cd$\qquad Rule (b),(103), (107).
		\item[(109)] $\to~\sim f(cd),t ~\sim f(f(cd)),f(t)$\\
    Rule (c), (10) with $s=f(cd)$.
		\item[(110)] $\to~\sim f(cd),f(d)f(c) ~\sim f(f(cd)),f(f(d)f(c))$\\
    Rule (c), (109) with $t=f(d)f(c)$.
		\item[(111)] $\sim f(f(cd)),f(f(d)f(c))$ \qquad Rule (b), (79), (110).
	  \item[(112)] $\to~\sim f(f(cd)),t~\to~\sim t,u~\sim f(f(cd)),u$\\
	  Rule (c), (14) with $s=f(f(cd))$.
		\item[(113)] $\to~\sim f(f(cd)),f(f(d)f(c))~\to~\sim f(f(d)f(c)),u~\sim f(f(cd)),u$\\
	   Rule (c), (112) with $t=f(f(d)f(c))$.
		\item[(114)] $\to~\sim f(f(cd)),f(f(d)f(c))~\to~\sim f(f(d)f(c)),cd~\sim f(f(cd)),cd$\\
	   Rule (c), (113) with $u=cd$.
		\item[(115)] $\to~\sim f(f(d)f(c)),cd~\sim f(f(cd)),cd$\\
	   Rule (b), (111), (114).
		\item[(116)] $\sim f(f(cd)),cd$ \qquad Rule (b), (108), (115).
	  \item[(117)] $\to W cd \to W f(cd) \to ~ \sim f(f(cd)),cd 
		~G(cd)$\\ 
		Rule (a), axiom of the propositional calculus.
		\item[(118)] $\to W f(cd) \to ~ \sim f(f(cd)),cd ~G(cd)$\qquad Rule (b), (71), (117).
		\item[(119)] $\to ~ \sim f(f(cd)),cd ~ G(cd)$\\ Rule (b), (86), (118).
		\item[(120)] $G(cd)$ \qquad Rule (b), (116), (119).
\end{itemize}
We have deduced $G(cd)$ in $[M';{\mathcal L}']$. It follows from the Deduction Theorem \cite[(4.5)]{Ku} that the formula
$
\to ~ G(c) ~ \to G(d) ~ G(cd)
$
is provable in $[M_A;{\mathcal L}']$. From the generalization of the 
constant symbols $c$, $d$ according to \cite[Corollary (4.9)(b)]{Ku} 
we see that
$$
\to ~ G(x) ~ \to G(y) ~ G(xy)
$$
is provable in the original mathematical system $[M;{\mathcal L}]$.
Moreover, the formulas $G(a)$ in (49) and $G(b)$ in (53) are also provable in $[M;{\mathcal L}]$.
We apply Rule (e) in $[M;{\mathcal L}]$ on the last three formulas
and finally conclude that the formulas
$
\to ~ W\,u ~ G(u)
$
and hence
$$
\to ~ W\,x ~ W\,f(x) \quad \mbox{and} \quad  \to ~ W\,x ~ \sim \,f(f(x)),x 
$$
are provable in $[M;{\mathcal L}]$.\\

\subsection{On prime formulas not occurring in the basis axioms}\label{prime_formulas}\hfill\\
In this note we determine a simple procedure in order to eliminate prime formulas from formal proofs 
which do not occur with a given arity in the basis axioms of a mathematical system.

Let $[M;{\mathcal L}]$ with $M = [S; A_M; P_M; B_M]$ be a mathematical system with restricted argument lists in $\mathcal L$.
Assume that $q \in P_M$ does not occur $j$-ary in the basis axioms $B_M$, where $j \geq 0$ is a given integer number.
Let $[\Lambda]=[F_1;\ldots;F_l]$ be a proof in $[M;{\mathcal L}]$ with the steps $F_1 ; \ldots; F_l$.
For a variable $z \in X$ not involved in $B_S$ we put as abbreviation the contradiction 
$$ C = \& \, \forall z \, \sim z,z ~\neg \forall z \, \sim z,z \,.$$
If replace in each formula $F$ of $[M;{\mathcal L}]$ all subformulas
of the form $q \lambda_1,\ldots,\lambda_j$ with $\lambda_1,\ldots,\lambda_j \in {\mathcal L}$
by the contradiction $C$ then we obtain the formula ${\mathcal C}(F)$ with argument lists in ${\mathcal L}$.
We will show that
$$[{\mathcal C}({\Lambda})]=[{\mathcal C}(F_1);\ldots;{\mathcal C}(F_l)]$$ 
is again a proof in $[M;{\mathcal L}]$, where $q$ does not occur $j$-ary in $[{\mathcal C}({\Lambda})]$\,.
We can subsequently apply this procedure and obtain the following result:
Apart from the equations we can replace all prime formulas in the original proof $[\Lambda]$ by $C$
which do not appear as subformulas with a given arity in the basis axioms of $[M;{\mathcal L}]$.
All other prime formulas which occur as subformulas in the steps of $[\Lambda]$ are not affected
by this procedure.

In the sequel we fix the quantities $q \in P_M$ and $j \in \setN_0$
in the definition of $C$ and ${\mathcal C}(\cdot)$.

\noindent
\textit{Lemma:} Let $F$ be a formula in $[M;{\mathcal L}]$ 
Then for every list $\mu \in {\mathcal L}$ and for all variables $x \in X$ with 
$CF(F; \mu; x)$ there holds the condition $\mbox{CF}({\mathcal C}(F);\,\mu;\,x)$ 
and the equation
\[
{\mathcal C}(\mbox{SbF}(F; \mu; x)) =\mbox{SbF}({\mathcal C}(F); \mu; x)\,.
\]
\begin{proof}
\noindent
We use induction with respect to the rules for generating 
formulas in $[M;{\mathcal L}]$ and fix
a variable $x \in X $ as well as a list $\mu \in {\mathcal L}$.

\noindent
We say that a formula $F$ in $[M;{\mathcal L}]$ satisfies Condition $(*)$
if the condition $\mbox{CF}(F;\,\mu;\,x)$ implies
the condition $\mbox{CF}({\mathcal C}(F);\,\mu;\,x)$
and the equation
${\mathcal C}(\mbox{SbF}(F; \mu; x)) =\mbox{SbF}({\mathcal C}(F); \mu; x)$\,.

\noindent
We prove that Condition $(*)$ is satisfied for all formulas $F$ in $[M;{\mathcal L}]$. 
We use the definitions \cite[(3.6) and (3.7)]{Ku} and the notations occurring there 
by treating the corresponding cases (a)-(d) in these definitions. 
\begin{itemize}
\item[(a)] If $F$ is a prime formula in $[M;{\mathcal L}]$ of the form
$q \lambda_1,\ldots,\lambda_j$ then ${\mathcal C}(F)=C$ with
$\mbox{CF}(F;\,\mu;\,x)$ and $\mbox{CF}(C;\,\mu;\,x)$,
and we have
$${\mathcal C}(\mbox{SbF}(F; \mu; x)) =C=\mbox{SbF}({\mathcal C}(F); \mu; x)$$
Otherwise $F$ is a prime formula in $[M;{\mathcal L}]$ different 
from $q \lambda_1,\ldots,\lambda_j$ with ${\mathcal C}(F)=F$.
In both cases we have confirmed Condition $(*)$ for the prime formulas.
\item[(b)] We assume that Condition $(*)$ is satisfied 
for a formula $F$ in $[M;{\mathcal L}]$ and that
$\mbox{CF}(\neg \, F; \mu; x)$. Then there holds
the condition $\mbox{CF}(F; \mu; x)$, and we have 
${\mathcal C}(\neg \, F)=\neg \,{\mathcal C}(F) $. 
Since $F$ satisfies Condition $(*)$,
we conclude that $\mbox{CF}({\mathcal C}(F); \mu; x)$ and
$\mbox{CF}({\mathcal C}(\neg \, F); \mu; x)$ are valid and that
the equations
\[
{\mathcal C}(\mbox{SbF}(\neg \, F; \mu; x)) =
\neg \,{\mathcal C}(\mbox{SbF}(F; \mu; x)) =
\mbox{SbF}({\mathcal C}(\neg \, F); \mu; x)
\]
are satisfied. Thus we have confirmed Condition $(*)$ for $\neg \, F$.
\item[(c)] We assume that Condition $(*)$ is satisfied for the
$[M;{\mathcal L}]$-formulas $F,G$ and that $\mbox{CF}(J \, F G; \mu; x)$ holds. 
We obtain $\mbox{CF}(F; \mu; x)$ and $\mbox{CF}(G; \mu; x)$. 
Since $F$ and $G$ satisfy Condition $(*)$,
we conclude that $\mbox{CF}({\mathcal C}(F); \mu; x)$ and 
$\mbox{CF}({\mathcal C}(G); \mu; x)$ are both valid.
Therefore $\mbox{CF}(J\,{\mathcal C}(F)\,{\mathcal C}(G); \mu; x)$
and
$\mbox{CF}({\mathcal C}(J\,F G); \mu; x)$ are satisfied.
Since $F$ and $G$ satisfy Condition $(*)$, we obtain
\begin{align}
{\mathcal C}(\mbox{SbF}(J \, F G; \mu; x)) =
{\mathcal C}(J\,F\,\frac{\mu}{x} ~ G\,\frac{\mu}{x}) =
J\,{\mathcal C}(F\,\frac{\mu}{x}) ~\, {\mathcal C}(G\,\frac{\mu}{x})\nonumber \\ = 
J\,{\mathcal C}(F)\,\frac{\mu}{x} \,{\mathcal C}(G)\,\frac{\mu}{x} = 
\mbox{SbF}({\mathcal C}(J \, F G); \mu; x)\,,\nonumber
\end{align}
i.e. Condition $(*)$ is satisfied for $J \, F G$.
\item[(d)]
We assume that Condition $(*)$ is satisfied for a formula $F$ in $[M;{\mathcal L}]$ 
and that there holds $\mbox{CF}(Q\,y\,F; \mu; x)$.
We further keep in mind that $\mbox{free}({\mathcal C}(F)) \subseteq \mbox{free}(F)$
and that ${\mathcal C}(Q\,y\,F)=Q\,y\, {\mathcal C}(F)$.
In the case $x \notin \mbox{free}(F)\setminus \{y\}$ 
we have $x \notin \mbox{free}({\mathcal C}(F))\setminus \{y\}$ and conclude that 
$\mbox{CF}({\mathcal C}(Q\,y\,F);\,\mu;\,x)$ 
as well as $${\mathcal C}(\mbox{SbF}(Q\,y\,F;\,\mu;\,x))=
{\mathcal C}(Q\,y\,F)=
\mbox{SbF}({\mathcal C}(Q\,y\,F);\,\mu;\,x)\,.$$
Otherwise we use that
$\mbox{CF}(Q\,y\,F; \mu; x)$ is satisfied with $x \neq y$ and conclude that
$y \notin \mbox{var}(\mu)$ and $\mbox{CF}(F; \mu; x)$.
Recall that $F$ satisfies the Condition $(*)$ 
which implies $\mbox{CF}({\mathcal C}(F); \mu; x)$. 
From $y \notin \mbox{var}(\mu)$
and $\mbox{CF}({\mathcal C}(F); \mu; x)$ we conclude 
$\mbox{CF}(Q\,y\,{\mathcal C}(F); \mu; x)$, i.e.
$\mbox{CF}({\mathcal C}(Q\,y\,F); \mu; x)$ is again satisfied.
Since $F$ satisfies the Condition $(*)$, we finally conclude due to $x \neq y$ that
\begin{equation*}
\begin{split}
{\mathcal C}(\mbox{SbF}(Q \, y \, F ; \mu ; x))&=
Q \, y \, {\mathcal C}(\mbox{SbF}(F ; \mu ; x))\\
&=Q \, y \, \mbox{SbF}({\mathcal C}(F) ; \mu ; x)\\
&=\mbox{SbF}({\mathcal C}(Q \, y \, F) ; \mu ; x)\,,\\
\end{split}
\end{equation*}
i.e. Condition $(*)$ is satisfied for $Q \, y \, F$.
\end{itemize}
Thus we have proved the lemma.
\end{proof}

\noindent
\textit{Theorem:} With the assumptions of this subsection we obtain that
$$[{\mathcal C}({\Lambda})]=[{\mathcal C}(F_1);\ldots;{\mathcal C}(F_l)]$$ 
is again a proof in $[M;{\mathcal L}]$\,.
\begin{proof} We employ induction with respect to the
rules of inference. First we note that for the ``initial proof"
$[\Lambda]=[\,]$ we can choose $[{\mathcal C}(\Lambda)]=[\,]$.

In the sequel we assume that $[\Lambda]$ 
as well as $[{\mathcal C}(\Lambda)]=[{\mathcal C}(F_1);...;{\mathcal C}(F_l)]$ 
are both proofs in $[M;{\mathcal L}]$.

\begin{itemize}
\item[(a)]
Let $H$ be an axiom in $[M;{\mathcal L}]$.
Then $[\Lambda_{*}] = [\Lambda\,;\,H\,]$ 
is also a proof in $[M;{\mathcal L}]$ 
due to Rule (a). We show that ${\mathcal C}(H)$ is again an axiom. 
Then $[{\mathcal C}(\Lambda_{*})] = [{\mathcal C}(\Lambda)\,;\,{\mathcal C}(H)\,]$ 
is a proof in $[M;{\mathcal L}]$ due to Rule (a).
For this purpose we distinguish four cases.

1.) ~ Let $\alpha=\alpha(\xi_1,...,\xi_m)$ be an identically true
propositional function of the distinct propositional variables $\xi_1,...,\xi_m$, $m \geq 1$. We suppose without loss of generality that all $m$ propositional variables 
occur in $\alpha$.
If $H_1$,...,$H_m$ are formulas in $[M;{\mathcal L}]$ with $H = \alpha(H_1,...,H_m)$,
then 
${\mathcal C}(H) = \alpha({\mathcal C}(H_1),...,{\mathcal C}(H_m))$ 
is again an axiom of the propositional calculus in $[M;{\mathcal L}]$. 

2.) ~If $H$ is an axiom of equality \cite[(3.10)(a),(b)]{Ku}
in $[M;{\mathcal L}]$ then ${\mathcal C}(H)=H$.
For \cite[(3.10)(c)]{Ku}, $p \neq q$ or $n \neq j$
we have again ${\mathcal C}(H)=H$. 
For \cite[(3.10)(c)]{Ku}, $p = q$ and $n = j$ we obtain that
${\mathcal C}(H)$ is an axiom of the propositional calculus.

3.) ~If $H$ is a quantifier axiom \cite[(3.11)]{Ku}
then ${\mathcal C}(H)$ is again a quantifier axiom.
For the quantifier axioms (3.11)(b) we further have to note that 
$x \notin \mbox{free}(F)$ implies $x \notin \mbox{free}({\mathcal C}(F))$.

4.) ~For $H \in B_M$ we obtain again ${\mathcal C}(H)=H$.

\item[(b)]
Let $F$, $G$ be two formulas in $[M;{\mathcal L}]$ and $F$, $\to F\,G$ both steps of the proof 
$[\Lambda]$. Then 
$[\Lambda_{*}] = [\Lambda\,;\,G\,]$ is also a proof in $[M;{\mathcal L}]$ due to Rule (b).
It follows that ${\mathcal C}(F)$ and 
${\mathcal C}(\to F\,G)\,=\,\to {\mathcal C}(F)\,{\mathcal C}(G)$
are both steps of the proof $[{\mathcal C}(\Lambda)]$ due to our assumptions,
and due to Rule (b) we put 
$[{\mathcal C}(\Lambda_{*})] = 
[{\mathcal C}(\Lambda)\,;{\mathcal C}(G)\,]$
for the required proof in $[M;{\mathcal L}]$. 
\item[(c)]
Let $F \in [\Lambda]$, $x \in X$ and $\lambda \in {\mathcal L}$.
Suppose that there holds the condition $\mbox{CF}(F;\lambda;x)$.
Then $[\Lambda_{*}] = [\Lambda\,;\,F\,\frac{\lambda}{x}\,]$ is also a proof 
in $[M;{\mathcal L}]$ due to Rule (c). 
We obtain from the previous lemma that there holds the condition
$\mbox{CF}({\mathcal C}(F);\,\lambda;\,x)$ and the equation 
${\mathcal C}(F\,\frac{\lambda}{x})={\mathcal C}(F)\,\frac{\lambda}{x}$.
Since ${\mathcal C}(F) \in [{\mathcal C}(\Lambda)]$ we conclude that
$[{\mathcal C}(\Lambda_{*})] = 
[{\mathcal C}(\Lambda)\,;{\mathcal C}(F\,\frac{\lambda}{x})\,]$
is a proof in $[M;{\mathcal L}]$ due to Rule (c).
\item[(d)]
Let $F \in [\Lambda]$ and $x \in X$. Then 
$[\Lambda_{*}] = [\Lambda\,;\,\forall\,x\,F\,]$ is also a proof in
$[M;{\mathcal L}]$
due to Rule (d).
Since $F \in [\Lambda]$ implies 
${\mathcal C}(F) \in [{\mathcal C}(\Lambda)]$ and since 
${\mathcal C}(\forall\,x\,F)=\forall\,x\,{\mathcal C}(F)$, we can apply Rule (d) on 
$[{\mathcal C}(\Lambda)]$, ${\mathcal C}(F)$ in order to conclude that
$[{\mathcal C}(\Lambda_{*})] = 
[{\mathcal C}(\Lambda)\,;{\mathcal C}(\forall\,x\,F)\,]$
is a proof in $[M;{\mathcal L}]$.  
\item[(e)] 
In the following we fix a predicate symbol $p \in P_S$,
a list $x_1,...,x_i$ of $i\geq 0$ distinct variables
and a formula $G$ in $[M;{\mathcal L}]$. 
We suppose that $x_1,...,x_i$ and the variables of $G$
are not involved in $B_S$.

Then to every R-formula $F$ of $B_S$ there corresponds exactly one formula $F'$ 
of the mathematical system, which is obtained if we replace in $F$ each 
$i-$ary subformula $p \, \lambda_1,...,\lambda_i$, 
where $\lambda_1,...,\lambda_i$ are lists, by the formula 
$G ~ \frac{\lambda_1}{x_1}...\frac{\lambda_i}{x_i}$.
Note that in this case $\lambda_1,...,\lambda_i \in {\mathcal L}$.
 
Suppose that $F'$ is a step of $[\Lambda]$ for all R-formulas $F \in B_S$ 
for which $p$ occurs $i-$ary in the R-conclusion of $F$.
Then $$[\Lambda_*]=[\Lambda; ~\to ~p\,x_1,...,x_i ~G]$$ 
is also a proof in $[M;{\mathcal L}]$ due to Rule (e).

We distinguish two cases: In the first case we assume that
$p=q$ and $i=j$.
Then we can apply Rule (a) on the formula
$${\mathcal C}(~\to ~p\,x_1,...,x_i ~G)=~\to ~C ~{\mathcal C}(G)\,,$$
which is an axiom of the propositional calculus\,,
and conclude that
\[[{\mathcal C}(\Lambda_{*})] = 
[{\mathcal C}(\Lambda)\,;{\mathcal C}(~\to ~p\,x_1,...,x_i ~G)\,]\]
is a proof in $[M;{\mathcal L}]$. See also \cite[(3.14), Example 2]{Ku}.

In the second case we assume that $p \neq q$ or $i \neq j$.
For every R-formula $F$ of $B_S$ we define the formula $F''$ 
of $[M;{\mathcal L}]$ which is obtained if we replace in $F$ each 
$i-$ary subformula $p \, \lambda_1,...,\lambda_i$  
with $\lambda_1,...,\lambda_i \in {\mathcal L}$ by the formula 
${\mathcal C}(G) ~ \frac{\lambda_1}{x_1}...\frac{\lambda_i}{x_i}$
and note that the variables in ${\mathcal C}(G)$ are not involved
in $B_S$, because we have assumed that the bound variable $z$ in the contradiction $C$ does not occur in $B_S$.
We have assumed that $q$ does not occur $j$-ary in $B_M$, hence in the formula $F'$
the symbol $q$ can only occur $j$-ary within the subformulas 
$\begin{displaystyle} G ~ \frac{\lambda_1}{x_1}...\frac{\lambda_i}{x_i} \end{displaystyle}$.
From the previous lemma we obtain 
\begin{equation*}
{\mathcal C}(G) ~ \frac{\lambda_1}{x_1}...\frac{\lambda_i}{x_i}=
{\mathcal C}(G ~ \frac{\lambda_1}{x_1}...\frac{\lambda_i}{x_i})\,.
\end{equation*}
We see that $F''={\mathcal C}(F' )$ and recall that $[\Lambda]$ as well as 
$[{\mathcal C}(\Lambda)]=[{\mathcal C}(F_1);...;{\mathcal C}(F_l)]$ 
are both proofs in $[M;{\mathcal L}]$.
Hence $F''$ is a step of
the proof $[{\mathcal C}(\Lambda)]$ for all R-formulas $F \in B_S$ 
for which $p$ occurs $i-$ary in the R-conclusion of $F$.
Therefore we can apply Rule (e) on $[{\mathcal C}(\Lambda)]$ and conclude that
\[[{\mathcal C}(\Lambda_{*})] = 
[{\mathcal C}(\Lambda)\,;{\mathcal C}(~\to ~p\,x_1,...,x_i ~G)\,]\]
with ${\mathcal C}(~\to ~p\,x_1,...,x_i ~G)
=~\to ~p\,x_1,...,x_i ~{\mathcal C}(G)$ is a proof in $[M;{\mathcal L}]$. 
\end{itemize}
Thus we have proved the theorem.
\end{proof} 

The lemma and theorem of this subsection have a strong resemblance
to \cite[(4.7) Lemma, (4.8) Theorem]{Ku}, and the proofs are very similar.
It arises the question whether there is a more general result which
is relevant in elementary proof theory.

\subsection{A general theorem concerning formal induction and its application to PA}\label{pa}\hfill\\
\noindent
We consider a mathematical system $M=[S;A_M;P_M;B_M]$ with an underlying recursive system $S=[A_S;P_S;B_S]$ such that $A_M = A_S$, $P_M = P_S$ and 
$B_M = B_S$ and assume that $[M;{\mathcal L}]$ is a mathematical system with restricted argument lists in ${\mathcal L}$\,.
We suppose that ${\mathcal L}$ is enumerable, for more details see the text introducing \cite[(5.4) Conjecture]{Ku}.
We will study this mathematical system
$[M;{\mathcal L}]$ until we discuss its application to Peano arithmetic PA.

\begin{defin}\label{sl}
An R-derivation $[\Lambda]$ in $[S;{\mathcal L}]$ is defined as an R-derivation in $S$
with  the following restrictions: The R-formulas in $[\Lambda]$ and 
the R-formulas $F$, $G$ in \cite[(1.11)]{Ku} have only argument lists in 
${\mathcal L}$,
and the use of the Substitution Rule \cite[(1.11)(c)]{Ku} is restricted 
to $\lambda \in {\mathcal L}$.
Then the R-formulas in $[\Lambda]$ are called R-derivable 
in $[S;{\mathcal L}]$. By
$\Pi_R(S;{\mathcal L})$ we denote the set of all 
R-derivable R-formulas in $[S;{\mathcal L}]$\,,
by $\Pi(M;{\mathcal L})$ the set of all 
provable formulas in $[M;{\mathcal L}]$\,.
\end{defin}

\noindent
From \cite[Section 3, Example 2]{Ku} we know
that the formula $\neg q \, x_1,\ldots,x_j$ with $x_1,\ldots,x_j \in X$
is provable in $[M;{\mathcal L}]$
whenever $q \in P_S$ does not occur $j$-ary in $B_S$.
On the other hand we have shown the consistency of $[M;{\mathcal L}]$
in \cite[(5.1) Proposition]{Ku}.

\noindent
We will first simplify the syntax of the formulas $F$
in $[M;{\mathcal L}]$ by re\-moving the quantifier $\exists$
and the symbols $\vee$, $\&$ and $\leftrightarrow$.
By $\mbox{Form}(M;{\mathcal L})$ we denote the 
set of all formulas in $[M;{\mathcal L}]$.
Let ${\mathcal F}$ be the set of all formulas in 
$\mbox{Form}(M;{\mathcal L})$ without the symbols 
$\exists$, $\vee$, $\&$ and $\leftrightarrow$
and define the mapping $\Theta : {\mbox Form}(M;{\mathcal L}) \mapsto
{\mathcal F}$ as follows:
\begin{itemize}
\item[1.] $\Theta(F)=F$ if $F$ is a prime-formula in $[M;{\mathcal L}]$.
\item[2.] $\Theta(\neg F)=\neg \Theta(F)$ for all
formulas $F$ in $[M;{\mathcal L}]$.
\item[3.] For all $F, G \in \mbox{Form}(M;{\mathcal L})$ we have
\begin{itemize}
\item[i.] $\Theta(\rightarrow F G) \, = \, \rightarrow \Theta(F) \Theta(G)$.
\item[ii.] $\Theta(\vee F G) \, = \,\rightarrow \neg \Theta(F) \Theta(G)$.
\item[iii.] $\Theta(\& F G) \, = \,\neg \rightarrow \Theta(F) \neg \Theta(G)$.
\item[iv.] $\Theta(\leftrightarrow F G) \, = \,\neg \,\rightarrow \,\rightarrow \Theta(F) \Theta(G) \neg \rightarrow \Theta(G) \Theta(F)$.
\end{itemize}
\item[4.]
\begin{itemize}
\item[i.] $\Theta(\forall x F)=\forall x \Theta(F)$ for all $x \in X$ and
$F \in \mbox{Form}(M;{\mathcal L})$.
\item[ii.] $\Theta(\exists x F)=
\neg \forall x \neg \Theta(F)$ for all $x \in X$, 
$F \in \mbox{Form}(M;{\mathcal L})$.
\end{itemize}
\end{itemize}
\begin{thm}\label{simply}
Let $[\Lambda]=[F_1;\ldots;F_l]$ be a proof in $[M;{\mathcal L}]$ with the steps $F_1 ; \ldots; F_l$. Then
$$[\Theta({\Lambda})]=[\Theta(F_1);\ldots;\Theta(F_l)]$$ 
is again a proof in $[M;{\mathcal L}]$\,.
For all $k=1,\ldots,l$ the formula $\Theta(F_k)$ can be derived with
the same rule of inference that was used for the derivation of $F_k$ 
in the proof $[\Lambda]$.
\end{thm}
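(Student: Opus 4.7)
The plan is to proceed by induction on the length $l$ of the proof $[\Lambda]$, showing at each step that whichever rule (a)--(e) was used to derive $F_k$ from earlier steps also derives $\Theta(F_k)$ from the already-translated earlier steps $\Theta(F_1),\ldots,\Theta(F_{k-1})$. This automatically yields both conclusions of the theorem. The architecture closely parallels the theorem of subsection \ref{prime_formulas}.

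The essential preliminary is an auxiliary lemma: for every $F \in \mbox{Form}(M;\mathcal{L})$, every $\mu \in \mathcal{L}$ and every $x \in X$ the condition $\mbox{CF}(F;\mu;x)$ implies $\mbox{CF}(\Theta(F);\mu;x)$ and $\Theta(\mbox{SbF}(F;\mu;x)) = \mbox{SbF}(\Theta(F);\mu;x)$; moreover $\mbox{free}(\Theta(F)) = \mbox{free}(F)$. I would prove this by induction on well-formed formulas using \cite[(3.6),(3.7)]{Ku}. The $\rightarrow$, $\neg$ and $\forall$ clauses are immediate from the definition of $\Theta$, and the clauses $\vee$, $\&$, $\leftrightarrow$ and $\exists$ unfold by definition into nested applications of the $\rightarrow$, $\neg$ and $\forall$ cases, so they follow formally.

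With this lemma, Rules (b), (c), (d) translate directly: Modus Ponens uses $\Theta(\rightarrow F\,G) = \rightarrow \Theta(F)\,\Theta(G)$; the Substitution Rule uses the commutation $\Theta(F\frac{\lambda}{x}) = \Theta(F)\frac{\lambda}{x}$ together with the preserved collision-free condition; generalization uses $\Theta(\forall x\,F) = \forall x\,\Theta(F)$. For Rule (a) I would distinguish the four kinds of axioms as in subsection \ref{prime_formulas}. Basis R-axioms in $B_M = B_S$ and the equality axioms from \cite[(3.10)]{Ku} contain only $\rightarrow$ and prime formulas, hence are fixed by $\Theta$. A propositional-calculus axiom $\alpha(H_1,\ldots,H_m)$ is mapped to $\alpha'(\Theta(H_1),\ldots,\Theta(H_m))$, where $\alpha'$ is the propositional function obtained from $\alpha$ by the tautological rewrites of the definition of $\Theta$; since these rewrites are classical logical equivalences, $\alpha'$ remains identically true and the $\Theta$-image is again a propositional-calculus axiom. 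For the quantifier axioms \cite[(3.11)]{Ku}, cases (a) and (b) transfer directly using the substitution lemma and $\mbox{free}(\Theta(F)) = \mbox{free}(F)$, while axiom (c), $\leftrightarrow \neg\,\forall x\,\neg F\ \exists x\,F$, is handled by observing that the two subformulas $\Theta(\neg\,\forall x\,\neg F)$ and $\Theta(\exists x\,F)$ coincide as the literally identical formula $\neg\,\forall x\,\neg\,\Theta(F)$, so the $\Theta$-image is an instance of the tautology $A \leftrightarrow A$ and hence a propositional-calculus axiom.

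The main obstacle is Rule (e). Suppose this rule produced $F_k = \rightarrow p\,x_1,\ldots,x_i\,G$ from premises $F'$, one for every R-axiom $F \in B_S$ in which $p$ occurs $i$-ary in the R-conclusion, where $F'$ is obtained from $F$ by replacing each such subformula $p\,\lambda_1,\ldots,\lambda_i$ by $G\frac{\lambda_1}{x_1}\cdots\frac{\lambda_i}{x_i}$. I would show that $\Theta(F')$ equals the corresponding formula $F''$ obtained from $F$ by substituting $\Theta(G)\frac{\lambda_1}{x_1}\cdots\frac{\lambda_i}{x_i}$ instead. Since $F$ is an R-formula built only from $\rightarrow$ and prime formulas, $\Theta$ commutes with its recursive structure, and at each substituted occurrence iterated application of the auxiliary lemma gives $\Theta(G\frac{\lambda_1}{x_1}\cdots\frac{\lambda_i}{x_i}) = \Theta(G)\frac{\lambda_1}{x_1}\cdots\frac{\lambda_i}{x_i}$; hence $\Theta(F') = F''$. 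As $\mbox{var}(\Theta(G)) \subseteq \mbox{var}(G)$, the proviso that the variables of $\Theta(G)$ are not involved in $B_S$ is inherited from the corresponding condition for $G$. Applying Rule (e) in $[\Theta(\Lambda)]$ with $\Theta(G)$ in place of $G$ yields $\rightarrow p\,x_1,\ldots,x_i\,\Theta(G) = \Theta(F_k)$, completing the induction.
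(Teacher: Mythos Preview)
Your proof is correct and follows essentially the same approach as the paper's: induction over the rules of inference, with the same case split for axioms (basis and equality axioms fixed by $\Theta$, propositional axioms sent to propositional axioms, quantifier axioms (a),(b) preserved, and (3.11)(c) becoming a propositional tautology), and the same treatment of Rule~(e) via $F'' = \Theta(F')$. You make the auxiliary substitution lemma (that $\mbox{CF}(F;\mu;x)$ implies $\mbox{CF}(\Theta(F);\mu;x)$ and $\Theta(F\frac{\mu}{x}) = \Theta(F)\frac{\mu}{x}$) explicit where the paper simply asserts it inside the Rule~(c) case, and you give a bit more detail on why $\Theta$ of a propositional axiom is again one, but these are elaborations rather than differences in strategy.
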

\begin{proof}
We employ induction with respect to the
rules of inference. First we note that for the ``initial proof"
$[\Lambda]=[\,]$ we can choose $[\Theta(\Lambda)]=[\,]$.

In the sequel we assume that $[\Lambda]$ 
as well as $[\Theta(\Lambda)]=[\Theta(F_1);...;\Theta(F_l)]$ 
are both proofs in $[M;{\mathcal L}]$.

\begin{itemize}
\item[(a)]The basis axioms in $[M;{\mathcal L}]$ are exactly the basis R-axioms of the underlying recursive system, hence $\Theta(F)=F$ for all 
formulas $F \in B_M=B_S$.
If $F$ is an axiom of equality then we have again $\Theta(F)=F$.
If $F$ is an axiom of the propositional calculus, then also $\Theta(F)$.
If $F$ is an quantifier axiom (3.11)(a),(b), then also $\Theta(F)$.
If $F$ is an quantifier axiom (3.11)(c), then $\Theta(F)$
is an axiom of the propositional calculus.
We conclude that $\Theta$ maps axioms into axioms.
\item[(b)]
Let $F$, $G$ be two formulas in $[M;{\mathcal L}]$ and $F$, $\to F\,G$ both steps of the proof $[\Lambda]$. Then 
$[\Lambda_{*}] = [\Lambda\,;\,G\,]$ is also a proof in $[M;{\mathcal L}]$ due to Rule (b).
It follows that 
$$\Theta(F) \quad \mbox{and} \quad
\Theta(\to F\,G)\,=\,\to \Theta(F)\,\Theta(G)
$$
are both steps of the proof $[\Theta(\Lambda)]$ due to our assumptions,
and due to Rule (b) we put 
$[\Theta(\Lambda_{*})] = 
[\Theta(\Lambda)\,;\Theta(G)\,]$
for the required proof in $[M;{\mathcal L}]$. 
\item[(c)]
Let $F \in [\Lambda]$, $x \in X$ and $\lambda \in {\mathcal L}$.
Suppose that there holds the condition $\mbox{CF}(F;\lambda;x)$.
Then $[\Lambda_{*}] = [\Lambda\,;\,F\,\frac{\lambda}{x}\,]$ is also a proof 
in $[M;{\mathcal L}]$ due to Rule (c). 
We obtain that there holds the condition
$\mbox{CF}(\Theta(F);\,\lambda;\,x)$ and the equation 
$\Theta(F\,\frac{\lambda}{x})=\Theta(F)\,\frac{\lambda}{x}$.
Since $\Theta(F) \in [\Theta(\Lambda)]$ we conclude that
$[\Theta(\Lambda_{*})] = 
[\Theta(\Lambda)\,;\Theta(F\,\frac{\lambda}{x})\,]$
is a proof in $[M;{\mathcal L}]$ due to Rule (c).
\item[(d)]
Let $F \in [\Lambda]$ and $x \in X$. Then 
$[\Lambda_{*}] = [\Lambda\,;\,\forall\,x\,F\,]$ is also a proof in
$[M;{\mathcal L}]$
due to Rule (d).
Since $F \in [\Lambda]$ implies 
$\Theta(F) \in [\Theta(\Lambda)]$ and since 
$\Theta(\forall\,x\,F)=\forall\,x\,\Theta(F)$, we can apply Rule (d) on 
$[\Theta(\Lambda)]$, $\Theta(F)$ in order to conclude that
$[\Theta(\Lambda_{*})] = 
[\Theta(\Lambda)\,;\Theta(\forall\,x\,F)\,]$
is a proof in $[M;{\mathcal L}]$.  
\item[(e)] 
In the following we fix a predicate symbol $p \in P_S$,
a list $x_1,...,x_i$ of $i\geq 0$ distinct variables
and a formula $G$ in $[M;{\mathcal L}]$. 
We suppose that $x_1,...,x_i$ and the variables of $G$
are not involved in $B_S$.

Then to every R-formula $F$ of $B_S$ there corresponds exactly one formula $F'$ 
of the mathematical system, which is obtained if we replace in $F$ each 
$i-$ary subformula $p \, \lambda_1,...,\lambda_i$, 
where $\lambda_1,...,\lambda_i$ are lists, by the formula 
$G ~ \frac{\lambda_1}{x_1}...\frac{\lambda_i}{x_i}$.
Note that in this case $\lambda_1,...,\lambda_i \in {\mathcal L}$.
 
Suppose that $F'$ is a step of $[\Lambda]$ for all R-formulas $F \in B_S$ 
for which $p$ occurs $i-$ary in the R-conclusion of $F$.
Then $$[\Lambda_*]=[\Lambda; ~\to ~p\,x_1,...,x_i ~G]$$ 
is also a proof in $[M;{\mathcal L}]$ due to Rule (e).

For every R-formula $F$ of $B_S$ we define the formula $F''$ 
of $[M;{\mathcal L}]$ which is obtained if we replace in $F$ each 
$i-$ary subformula $p \, \lambda_1,...,\lambda_i$  
with $\lambda_1,...,\lambda_i \in {\mathcal L}$ by the formula 
$\Theta(G) ~ \frac{\lambda_1}{x_1}...\frac{\lambda_i}{x_i}$
and note that the variables in $\Theta(G)$ are not involved
in $B_S$, because $\Theta(G)$ and $G$ both have the same variables.
We see that $F''=\Theta(F' )$ and recall that $[\Lambda]$ and
$[\Theta(\Lambda)]=[\Theta(F_1);...;\Theta(F_l)]$ 
are both proofs in $[M;{\mathcal L}]$.
Hence $F''$ is a step of
the proof $[\Theta(\Lambda)]$ for all R-formulas $F \in B_S$ 
for which $p$ occurs $i-$ary in the R-conclusion of $F$.
Therefore we can apply Rule (e) on $[\Theta(\Lambda)]$ and conclude that
\[[\Theta(\Lambda_{*})] = 
[\Theta(\Lambda)\,;\Theta(~\to ~p\,x_1,...,x_i ~G)\,]\]
with $\Theta(~\to ~p\,x_1,...,x_i ~G)
=~\to ~p\,x_1,...,x_i ~\Theta(G)$ is a proof in $[M;{\mathcal L}]$. 
\end{itemize}
Thus we have proved the theorem.
\end{proof}

\noindent
Now we will roughly divide the formulas in ${\mathcal F}$ into equivalence classes. This is used in order to present a well defined interpretation of the formulas $F \in {\mathcal F}$ in the mathematical system.
\noindent
\begin{defin}\label{classdef} Equivalence classes $\langle F \rangle$ of formulas 
$F \in {\mathcal F}$.
\begin{itemize}
\item[1.] By ${\mathcal P}$ we denote the set of all prime formulas in $[M;{\mathcal L}]$.
For any prime formula $F \in {\mathcal P}$ we have $F \in {\mathcal F}$ and put 
$\langle F \rangle = {\mathcal P}$ .
\item[2.] For $F \in {\mathcal F}$ we also have $\neg F \in {\mathcal F}$ and put
\begin{equation*}
\langle \neg F \rangle = \neg \langle F \rangle =
\left\{\neg F' \,:\,F' \in \langle F \rangle\, \right\}\,.
\end{equation*}
\item[3.] For $F, G \in {\mathcal F}$ we also have $\rightarrow\,F G \in {\mathcal F}$ and put
\begin{equation*}
\langle \rightarrow F G \rangle = \,\rightarrow \langle F \rangle \langle G \rangle =
\left\{\rightarrow F' G'  : F' \in \langle F \rangle, \, G' \in \langle G \rangle\, \right\}\,.
\end{equation*}
\item[4.] For $x \in X$, $F \in {\mathcal F}$ 
we also have $\forall x F \in {\mathcal F}$ and put
\begin{equation*}
\langle \forall\, x\, F \rangle =\forall \langle F \rangle =
\left\{\forall x' F' \,:\,x' \in X, \, F' \in \langle F \rangle\, \right\}\,.
\end{equation*}
\end{itemize}
\end{defin}
\noindent
The sets $\langle F \rangle$ with $F \in {\mathcal F}$ give a well-defined partition of 
${\mathcal F}$, two formulas $F$ and $F'$ in ${\mathcal F}$ are equivalent if and only if 
$\langle F \rangle =\langle F' \rangle$
for their equivalence classes $\langle F \rangle$ and $\langle F' \rangle$, respectively.
The construction of each class starts with ${\mathcal P}$ and terminates in a finite number of steps. It is purely syntactic, for example
$\neg \neg \forall \rightarrow {\mathcal P}{\mathcal P}$ and 
$\forall \rightarrow {\mathcal P}{\mathcal P}$ are disjoint.

\noindent
By ${\mathcal F}_*$ we denote the set of all formulas in ${\mathcal F}$
without free variables, also called \textit{statements}.
Let ${\mathcal L}_*$ be the set of all lists in ${\mathcal L}$ without variables. We suppose that ${\mathcal L}_*$ is not empty.
Now we will give an interpretation of all statements $F \in {\mathcal F}_*$ 
in the mathe\-matical system $[M;{\mathcal L}]$. 

\noindent
Using the verum $\top$, the empty set $\emptyset$ and formulas $F, G \in {\mathcal F}$
we define the following function 
$\mbox{V} \, : \, {\mathcal F}_* \, \mapsto \{\emptyset,\{\top\}\}$:
\begin{itemize}
\item[1.] If $\lambda, \mu \in {\mathcal L}_*$ then
\begin{equation*}
\mbox{V}(\sim \lambda, \mu)=
\begin{cases}
~\{\top\} \, &\ \text{if} \quad  \sim \lambda, \mu \in \Pi_R(S;{\mathcal L}) ,\\
~~ \emptyset \, &\ \text{otherwise}\,.\\
\end{cases}
\end{equation*}
Let $p \in P_S$
and $\lambda_1,...,\lambda_i \in {\mathcal L}_*$ for $i \geq 0$ be elementary $A_S$-lists in ${\mathcal L}_*$. Then we evaluate
\begin{equation*}
\mbox{V}(p\,\lambda_1,...,\lambda_i)=
\begin{cases}
~\{\top\} \, &\ \text{if} \quad  p\,\lambda_1,...,\lambda_i \in \Pi_R(S;{\mathcal L}) ,\\
~~ \emptyset \, &\ \text{otherwise}\,.\\
\end{cases}
\end{equation*}
\item[2.] For $\neg \,F \in {\mathcal F}_*$ we also have $F \in {\mathcal F}_*$ and require
\begin{equation*}
\mbox{V}(\neg\,F)=\{\top\} \,\setminus \mbox{V}(F)\,.
\end{equation*}
\item[3.] For $\rightarrow\,F G \in {\mathcal F}_*$ we also have 
$F, G \in {\mathcal F}_*$ and require
\begin{equation*}
\mbox{V}(\rightarrow\,F G) =\left(\{\top\} \,\setminus \mbox{V}(F)\right)\,
\cup \, \mbox{V}(G)\,.
\end{equation*}
\item[4.] For $x \in X$, $\forall\,x\,F \in {\mathcal F}_*$ and $\lambda \in {\mathcal L}_*$ 
we have $F\frac{\lambda}{x} \in {\mathcal F}_*$,
recall ${\mathcal L}_* \neq \emptyset$ and require
\begin{equation*}
\mbox{V}(\forall\,x\, F) =\bigcap \limits_{\lambda \in {\mathcal L}_*} 
\mbox{V}\left(F\frac{\lambda}{x}\right)\,.
\end{equation*}
\end{itemize}
\noindent
\noindent
We say that a formula $F \in {\mathcal F}_*$ is true if and only if 
$\top \in \mbox{V}(F)$. 
The sets $\langle F \rangle_{*}=\langle F \rangle \cap {\mathcal F}_*$ 
with $F \in {\mathcal F}_*$ form a partition of ${\mathcal F}_*$, 
and induction on the equivalence classes $\langle F \rangle_{*}$ in ${\mathcal F}_*$
shows that the function $\mbox{V}$ is well-defined. Of course, in general the
evaluation of $V(F)$ must be highly non-constructive.

\begin{defin}\label{gen}
Let $F$ be a formula in $[M;{\mathcal L}]$.
Let $x_1={\bf x_{j_1}}$,$\ldots$, $x_m={\bf x_{j_m}}$ be the list of all free variables in $F$, ordered with increasing indizes
${\bf j_1}<\ldots<{\bf j_m}$ of the variables.
We define 
$$\mbox{Free}(F)=[x_1;\ldots;x_m]\,, \quad
\mbox{Gen}(F)=\forall x_1 \ldots \forall x_m F\,,$$ 
namely the list $\mbox{Free}(F)$ of free variables in $F$ 
and the generalization of the formula $F$, respectively.
Especially for statements $F$ we have $\mbox{Free}(F)=[\,]$
and $\mbox{Gen}(F)=F$.
\end{defin}

Now we make use of Theorem \ref{simply} which guarantees
that proofs with formulas in ${\mathcal F}$ are 
not a real restriction and present our main result, namely

\begin{thm}\label{mainthm}
Let $F \in {\mathcal F}$ be a formula which is provable
in $[M;{\mathcal L}]$. Suppose that the set ${\mathcal L}_*$ 
of all lists in ${\mathcal L}$ without variables is not empty
and that ${\mathcal L}$ is enumerable. Then $\top \in V(\mbox{Gen}(F))$\,.
\end{thm}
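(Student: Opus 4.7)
The plan is to proceed by induction on the length of the proof $[\Lambda]=[F_1;\ldots;F_l]$ of $F$ in $[M;{\mathcal L}]$, showing that $\top \in V(\mathrm{Gen}(F_k))$ for every step $F_k$. Since Theorem \ref{simply} lets us replace $[\Lambda]$ by $[\Theta(\Lambda)]$ with the same rules of inference applied at each step, I may assume without loss of generality that all $F_k \in {\mathcal F}$, so that the interpretation $V$ is directly defined on every statement that arises when we substitute variable-free lists from ${\mathcal L}_*$ for the free variables. A convenient auxiliary lemma, proved separately by induction on the construction of formulas using the substitution machinery of Subsection \ref{cf}, states that $\top \in V(\mathrm{Gen}(H))$ if and only if $\top \in V\!\left(H\tfrac{\nu_1}{y_1}\ldots\tfrac{\nu_n}{y_n}\right)$ for every choice of $\nu_1,\ldots,\nu_n \in {\mathcal L}_*$, where $\mathrm{Free}(H)=[y_1;\ldots;y_n]$; this reduces every verification to checking truth of variable-free statements and is compatible with collision-free substitution.

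For the case distinction on the five rules of inference, the first four cases are essentially classical. For Rule (a) I verify, axiom class by axiom class, that all ground instances are sent to $\{\top\}$ by $V$: basis R-axioms in $B_M = B_S$ are handled because after substituting variable-free lists for all free variables, R-derivability in $[S;{\mathcal L}]$ is preserved by R-rules (a)--(c); equality axioms reduce to reflexivity, symmetry, transitivity, and congruence properties of $\sim$ inside $\Pi_R(S;{\mathcal L})$; propositional tautologies fall out from the Boolean structure built into $V$ on negation and implication; and the quantifier axioms (3.11)(a)--(c) translate directly into the intersection definition of $V(\forall x F)$. Rule (b) preserves truth by the definition of $V$ on $\to$; Rule (c) follows from the auxiliary substitution lemma together with Lemma 3.16 of \cite{Ku}; Rule (d) is immediate because $\mathrm{Gen}(\forall x F)$ and $\mathrm{Gen}(F)$ have the same ground instances after $V$-evaluation.

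The main obstacle is Rule (e). Here I must show that if, for every R-axiom $F \in B_S$ whose R-conclusion has $p$ occurring $i$-ary, the substituted formula $F'$ is a proof step of $[\Lambda]$ (so by the outer induction hypothesis satisfies $\top \in V(\mathrm{Gen}(F'))$), then $\top \in V\!\left(\mathrm{Gen}\bigl(\to p\,x_1,\ldots,x_i\;G\bigr)\right)$. Unfolding via the auxiliary lemma and the definition of $V$ on $\to$, this amounts to the implication: for all $\lambda_1,\ldots,\lambda_i \in {\mathcal L}_*$ and all variable-free instantiations of the remaining free variables of $G$, if $p\,\lambda_1,\ldots,\lambda_i \in \Pi_R(S;{\mathcal L})$ then $\top \in V\!\left(G\tfrac{\lambda_1}{x_1}\ldots\tfrac{\lambda_i}{x_i}\right)$ (after instantiating the other variables as well). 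I prove this by a secondary structural induction on the R-derivation of $p\,\lambda_1,\ldots,\lambda_i$ in $[S;{\mathcal L}]$, following the three R-rules (a), (b), (c) of \cite[(1.11)]{Ku}: the R-axiom step is handled directly by the hypothesis on $F'$, R-modus ponens is handled by the Boolean behaviour of $V$ on $\to$, and the Substitution Rule needs the enumerability of ${\mathcal L}$ (to traverse all lists in ${\mathcal L}_*$) together with the substitution lemma so that each intermediate R-conclusion transports to a true $V$-instance of the corresponding replacement formula. The delicate point is to carry along, at each step of this secondary induction, not just one R-conclusion but the family of substituted formulas indexed over variable-free instantiations, which is where enumerability of ${\mathcal L}$ and the non-emptiness of ${\mathcal L}_*$ are used. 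Once this secondary induction closes, Rule (e) is validated and the outer induction yields $\top \in V(\mathrm{Gen}(F))$.
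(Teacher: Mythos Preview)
Your outline for Rules (a)--(d) matches the paper's argument closely. The real divergence is in Rule (e). The paper does \emph{not} run a semantic induction on the R-derivation of $p\,\lambda_1,\ldots,\lambda_i$. Instead it argues syntactically: it shows that whenever $p\,\lambda_1,\ldots,\lambda_i \in \Pi_R(S;\mathcal{L})$, the formula $\tilde{G}(\lambda_1,\ldots,\lambda_i)$ is actually \emph{provable} in $[M;\mathcal{L}]$ as an extension of $[\Lambda]$ using only Rules (a)--(d). This is done via two interleaved algorithms: $\mathcal{A}$ enumerates an infinite R-derivation covering all of $\Pi_R(S;\mathcal{L})$ (this is where enumerability of $\mathcal{L}$ enters, not in the R-substitution rule as you suggest), and $\mathcal{B}$ mimics each R-step inside $[M;\mathcal{L}]$, producing both $R_k$ and, when $p$ occurs $i$-ary in the R-conclusion of $R_k$, the companion $\hat{R}_k$ with $p$-atoms replaced by $\tilde{G}$. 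Since Rules (a)--(d) have already been validated in the outer induction, every formula $\mathcal{B}$ produces is true under $V\circ\mathrm{Gen}$; in particular $\tilde{G}(\lambda_1,\ldots,\lambda_i)$ is.

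Your direct semantic route can in principle be made to work, but as written there is a gap. When you say ``the R-axiom step is handled directly by the hypothesis on $F'$'', this covers only basis R-axioms $F\in B_S$ whose R-conclusion has $p$ occurring $i$-ary. An R-derivation of $p\,\lambda_1,\ldots,\lambda_i$ may also pass through the equality R-axioms \cite[(1.9)(c)]{Ku} of the form
\[
\to\;\sim y_1,y_1'\;\ldots\;\to\; p\,y_1,\ldots,y_i\;\; p\,y_1',\ldots,y_i'\,,
\]
whose $\hat{\phantom{F}}$-version is $\to\;\sim y_1,y_1'\;\ldots\;\to\;\tilde{G}(y_1,\ldots,y_i)\;\tilde{G}(y_1',\ldots,y_i')$. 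Truth of this under $V$ is precisely the assertion that $V$ is a congruence for $\sim$ on arbitrary formulas $\tilde{G}$, and no hypothesis of Rule (e) gives you that; you would need a separate structural induction on $\tilde{G}$. The paper sidesteps this entirely by deriving these $\hat{F}$ \emph{syntactically} from the axioms of equality and the Equivalence Theorem \cite[(3.17)(a)]{Ku}, then invoking the already-established soundness of Rules (a)--(d). If you want to keep your semantic secondary induction, you must add and prove that congruence lemma explicitly.
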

\begin{proof}
Let $F \in {\mathcal F}$ be a formula in $[M;{\mathcal L}]$ 
with $\mbox{Free}(F)=[x_1;\ldots;x_m]$. Then $\top \in V(\mbox{Gen}(F))$
iff $\top \in V(F\frac{\lambda_1}{x_1}\ldots\frac{\lambda_m}{x_m})$
for all $\lambda_1, \ldots \lambda_m \in {\mathcal L}_*$.
This will be used throughout the whole proof.

We want to show for each proof $[\Lambda]$ in $[M;{\mathcal L}]$ 
with steps only in ${\mathcal F}$ that $\top \in V(\mbox{Gen}(F))$
for all $F \in [\Lambda]$. We employ induction with respect to the
rules of inference. 
First we note that the statement is true for the ``initial proof"
$[\Lambda]=[\,]$.

\noindent
Let $[\Lambda]=[F_1;\ldots;F_l]$ be a proof in $[M;{\mathcal L}]$ with the steps $F_1 ; \ldots; F_l \in {\mathcal F}$. 
Our induction hypothesis is
$\top \in V(\mbox{Gen}(F_k))$
for all $k=1,\ldots,l$\,.
\begin{itemize}
\item[(a)] Here we show that $\top \in V(\mbox{Gen}(F))$
for all axioms $F \in {\mathcal F}$. Then
the extended proof $[\Lambda_*]=[\Lambda;F]$
will also satisfy the statement.
\begin{itemize}
\item[$\bullet$] The basis axioms and the axioms of equality in 
$[M;{\mathcal L}]$ are R-axioms of the underlying recursive system. Assume that $F$ is such an axiom with $\mbox{Free}(F)=[x_1;\ldots;x_m]$ and that 
$\lambda_1, \ldots \lambda_m \in {\mathcal L}_*$. 
Then $\tilde{F}=F\frac{\lambda_1}{x_1}\ldots\frac{\lambda_m}{x_m}$
is an elementary R-formula in ${\mathcal F}$. We have
$\top \in V(\tilde{F})$ iff there is an R-premise in 
$\tilde{F}$ which is not R-derivable in $[S;{\mathcal L}]$
or if the R-conclusion of $\tilde{F}$ is R-derivable in $[S;{\mathcal L}]$.
But due to the Modus Ponens Rule the R-conclusion of $\tilde{F}$ is 
R-derivable in $[S;{\mathcal L}]$ if all R-premises in $\tilde{F}$ 
are R-derivable in $[S;{\mathcal L}]$. 
We see $\top \in V(\tilde{F})$ and hence
$\top \in V(\mbox{Gen}(F))$ in the case that $F$ is a basis axiom or an axiom of equality in the mathematical system $[M;{\mathcal L}]$.
\item[$\bullet$] Suppose that $\alpha=\alpha(\xi_1,...,\xi_j)$ is an identically true propositional function defined in \cite[(3.8)]{Ku} which is
only constructed with the negation symbol ``$\neg$'' and the
implication arrow ``$\rightarrow$'' and that 
$F_1$,...,$F_j \in {\mathcal F}$ are formulas in $[M;{\mathcal L}]$. 
Then the formula $F = \alpha(F_1,...,F_j) \in {\mathcal F}$ is an axiom of the propositional calculus.
Prescribe $\lambda_1, \ldots \lambda_m \in {\mathcal L}_*$
and put
$$
\tilde{F}=F\frac{\lambda_1}{x_1}\ldots\frac{\lambda_m}{x_m}\,, ~
\tilde{F_k}=F_k\frac{\lambda_1}{x_1}\ldots\frac{\lambda_m}{x_m}
$$
for $k=1,\ldots,j$ and $\mbox{Free}(F ) = [x_1;\ldots;x_m]$\,. 
For any two formulas $F', F'' \in {\mathcal F}_*$
we have $\top \in V(\neg F')$ iff $\top \notin V(F')$ 
and $\top \in V(\rightarrow F' F'')$ iff
$\top \in V(F')$ implies $\top \in V(F'')$, respectively.
We see that 
$
\tilde{F}=\alpha(\tilde{F_1}, \ldots, \tilde{F_j}) \in \mathcal{F}_*
$
is an axiom of the propositional calculus with $\top \in V(\tilde{F})$.
\item[$\bullet$] Suppose that $x \in X$, that $F \in \mathcal{F}$
and $$\mbox{Free}(\forall x F ) = [x_1;\ldots;x_m]\,.$$
We put $H = \, \rightarrow \forall x F ~ F$ and 
note that $x \notin [x_1;\ldots;x_m]$. We see
$\top \in V(\mbox{Gen}(H))$ iff
$$\top \in V(\mbox{SbF}(\tilde{H};\mu;x)) = 
V(\rightarrow \forall x \tilde{F} ~ 
\mbox{SbF}(\tilde{F};\mu;x))$$
for all $\mu, \lambda_1, \ldots \lambda_m \in {\mathcal L}_*$,
using $\tilde{F} = F\frac{\lambda_1}{x_1}\ldots\frac{\lambda_m}{x_m}$ and
$$
\tilde{H} = H\frac{\lambda_1}{x_1}\ldots\frac{\lambda_m}{x_m}
= \, \rightarrow \forall x \tilde{F} ~ \tilde{F}
$$
as abbreviations. 

Now $\top \in V(\forall x \tilde{F})$
implies indeed $\top \in V(\mbox{SbF}(\tilde{F};\mu;x))$
for all $\mu, \lambda_1, \ldots \lambda_m \in {\mathcal L}_*$,
independent of $x \in \mbox{Free}(F)$ or $x \notin \mbox{Free}(F)$.
\item[$\bullet$] Suppose that $x \in X$, that $F, G \in \mathcal{F}$
and that $x \notin \mbox{Free}(F)$, 
$\mbox{Free}(\forall x \rightarrow F G)=
\mbox{Free}(\rightarrow F \, \forall x G)=[x_1;\ldots;x_m]$. We put 
$$H = \, \rightarrow \forall x \rightarrow F G ~ \rightarrow F \, \forall x G\,,$$ fix arbitrary lists
$\lambda_1, \ldots \lambda_m \in {\mathcal L}_*$ and make use of the abbreviations
$\tilde{F} = F\frac{\lambda_1}{x_1}\ldots\frac{\lambda_m}{x_m}$ and
$\tilde{G} = G\frac{\lambda_1}{x_1}\ldots\frac{\lambda_m}{x_m}$.
We have
$$
\tilde{H} = H\frac{\lambda_1}{x_1}\ldots\frac{\lambda_m}{x_m}
= \, \, \rightarrow \forall x \rightarrow \tilde{F} \tilde{G} ~ 
\rightarrow \tilde{F} \, \forall x \tilde{G}
$$
with $\tilde{H} \in \mathcal{F}_*$\,.
In order to show $\top \in V(\tilde{H})$ we assume 
$\top \in V( \forall x \rightarrow \tilde{F} \tilde{G})$
and note that $x \notin \mbox{Free}(\tilde{F})$.
Then
$$\qquad \top \in V( \forall x \rightarrow \tilde{F} \tilde{G})
~\mbox{iff}~
\top \in V( \rightarrow \tilde{F} \, \mbox{SbF}(\tilde{G};\lambda;x))$$
for all $\lambda \in {\mathcal L}_*$.
Hence $\top \in V(\tilde{F})$
implies $\top \in V(\mbox{SbF}(\tilde{G};\lambda;x))$ for all 
$\lambda \in \mathcal{L}_*$, i.e. $\top \in V(\tilde{F})$
implies $\top \in V(\, \forall x \tilde{G})$,
and we have shown $\top \in V(\,\to \tilde{F}\, \forall x \tilde{G})$
and $\top \in V(\tilde{H})$.
\item[$\bullet$] Recall that the quantifier axiom (3.11)(c) is 
replaced by an axiom of the propositional calculus
due to Theorem \ref{simply}.
\end{itemize}
\item[(b)] Suppose that $F$ and $H=\rightarrow F G$ are both steps
of the proof $[\Lambda]=[F_1;\ldots;F_l]$
with $\mbox{Free}(\rightarrow F G)=[x_1;\ldots;x_m]$\,.
Then $\top \in V(\mbox{Gen}(F))$ and 
$\top \in V(\mbox{Gen}(H))$ from our induction hypothesis. Fix
$\lambda_1, \ldots \lambda_m \in {\mathcal L}_*$ and put
$\tilde{F} = F\frac{\lambda_1}{x_1}\ldots\frac{\lambda_m}{x_m}$,
$\tilde{G} = G\frac{\lambda_1}{x_1}\ldots\frac{\lambda_m}{x_m}$.\\

For 
$$
\tilde{H} = H\frac{\lambda_1}{x_1}\ldots\frac{\lambda_m}{x_m}
= \, \, \rightarrow \tilde{F} \tilde{G} 
$$
we have $\tilde{F}, \tilde{G}, \tilde{H} \in 
\mathcal{F}_*$, $\top \in V(\tilde{F})$,
$\top \in V(\tilde{H})$ and $\top \in V(\tilde{G})$.
Note that substitutions of variables in $[x_1;\ldots;x_m]$
not occurring in $F$ or $G$ are allowed, because they do not have any effect.
We obtain that the extended proof $[\Lambda_*]=[\Lambda;G]$
also satisfies our statement.
\item[(c)] Let $x \in X$ and suppose that $F \in \mathcal{F}$
is a step of the proof $[\Lambda]=[F_1;\ldots;F_l]$. 
Let $\lambda \in {\mathcal L}$ and
suppose that there holds the condition $\mbox{CF}(F;\lambda;x)$. 
Note that $\top \in V(\mbox{Gen}(F))$ from our induction hypothesis.

Without loss of generality we may assume that $x \in \mbox{free}(F)$,
where we use the \textit{set} $\mbox{free}(F)=\{x,x_1,\ldots,x_m\}$
(instead of ordered lists) with distinct variables $x,x_1,\ldots,x_m \in X$,
and put
$$
\Phi(F) = \{F\frac{\lambda_0}{x}
\frac{\lambda_1}{x_1}\ldots\frac{\lambda_m}{x_m}\,:
\,\lambda_0,\lambda_1,\ldots,\lambda_m \in \mathcal{L}_*\,\}\,.
$$
We write $\mbox{var}(\lambda) =\{y_1,\ldots,y_k\}$ and 
$\lambda = \lambda(y_1,\ldots,y_k)$\,.
From $x \in \mbox{free}(F)$ and $\mbox{CF}(F;\lambda;x)$ we see that 
$$\mbox{var}(\lambda)  \subseteq \mbox{free}\left(F\frac{\lambda}{x}\right)\,.$$
Hence we can write
$\mbox{free}(F\frac{\lambda}{x})=\{y_1,\ldots,y_n\}$
with $n \geq k$ distinct variables $y_1,\ldots,y_n\in X$
and define the new set
$$
\Phi(F;\lambda;x)=\{F\frac{\lambda}{x}
\frac{\mu_1}{y_1}\ldots\frac{\mu_n}{y_n}\,:
\,\mu_1,\ldots,\mu_n \in \mathcal{L}_*\,\}\,.
$$
Again from $\mbox{CF}(F;\lambda;x)$ we conclude that
$$\qquad
\Phi(F;\lambda;x)=\left\{F\frac{\lambda(\mu_1,\ldots,\mu_k)}{x}
\frac{\mu_1}{y_1}\ldots\frac{\mu_n}{y_n}\,:
\,\mu_1,\ldots,\mu_n \in \mathcal{L}_*\,\right\}\,,
$$
hence $\Phi(F;\lambda;x) \subseteq \Phi(F)$ and
\begin{equation*}
\begin{split}
   V(\mbox{Gen}(F))&=\bigcap \limits_{G \in \Phi(F)} V(G)\\
		&\subseteq \bigcap \limits_{G \in \Phi(F;\lambda;x)} V(G)
      = V\left(\mbox{Gen}\left(F\frac{\lambda}{x}\right)\right)\,.\\
\end{split}
\end{equation*}
We obtain from our induction hypothesis $\top \in V(\mbox{Gen}(F))$
that $\top \in V(\mbox{Gen}(F\frac{\lambda}{x}))$.
Now the extended proof $[\Lambda_*]=[\Lambda;F\frac{\lambda}{x}]$
satisfies our statement.
\item[(d)] Let $F$ be a step of the proof $[\Lambda]=[F_1;\ldots;F_l]$. 
If $x \in X$ is not a free variable of $F$ then
$F\frac{\lambda}{x}=F$ for all $\lambda \in {\mathcal L}_*$ and
$$V(\mbox{Gen}(\forall x \,F))=V(\mbox{Gen}(F))\,.$$
Then $\top \in V(\mbox{Gen}(\forall x \,F))$ from our induction hypothesis.
Now we suppose that $x \in \mbox{free}(F)=\{x_1,\ldots,x_m\}$ with distinct variables $x_1,\ldots,x_m$. In this case we see
$\top \in V(\mbox{Gen}(\forall x \,F))$ 
iff for all $\lambda_1,\ldots,\lambda_m \in \mathcal{L}_*$ 
$$ \top \in V\left(
F\frac{\lambda_1}{x_1}\ldots\frac{\lambda_m}{x_m}
\right)\,,$$
i.e. $V(\mbox{Gen}(\forall x \,F))=V(\mbox{Gen}(F))$, and obtain
$\top \in V(\mbox{Gen}(\forall x \,F))$ 
again from our induction hypothesis.
In any case the extended proof $[\Lambda_*]=[\Lambda;\forall x\,F]$
satisfies our statement.
\item[(e)] In the following we fix a predicate symbol $p \in P_S$,
a list $x_1,...,x_i$ of $i\geq 0$ distinct variables
and a formula $G \in {\mathcal F}$. 
We suppose that $x_1,...,x_i$ and the variables of $G$
are not involved in $B_S$.
Then to every R-formula $F$ of $B_S$ there corresponds exactly one formula 
$F'\in {\mathcal F}$ 
of the mathematical system, which is obtained if we replace in $F$ each 
$i-$ary subformula $p \, \lambda_1,...,\lambda_i$, 
where $\lambda_1,...,\lambda_i$ are lists, by the formula 
$G ~ \frac{\lambda_1}{x_1}...\frac{\lambda_i}{x_i}$.
We suppose that $F'$ is a step of a proof $[\Lambda]$ 
for all R-formulas $F \in B_S$ 
for which $p$ occurs $i-$ary in the R-conclusion of $F$.
\noindent
Now $[\Lambda_*]=[\Lambda; ~ \to ~p\,x_1,...,x_i ~G]$ is also a proof
in $[M;{\mathcal L}]$ with formulas in ${\mathcal F}$. 
To finish the proof of the main theorem it remains to show that
$\top \in V(\mbox{Gen}(~ \to ~p\,x_1,...,x_i ~G))$\,.
We may write 
$\mbox{free}(~ \to ~p\,x_1,...,x_i ~G)=\{x_1,...,x_m\}$
with $m \geq i$ distinct variables $x_1,...,x_m$. For $m>i$ we choose 
$\tilde{\lambda}_{i+1},\ldots,\tilde{\lambda}_{m} \in {\mathcal L}_*$
arbitrary but fixed and put
$$\tilde{G} = G\frac{\tilde{\lambda}_{i+1}}{x_{i+1}}
\ldots\frac{\tilde{\lambda}_{m}}{x_{m}}\,,$$
and otherwise we put $\tilde{G}=G$.
It is sufficient to show that
$\top \in V(\mbox{Gen}(~ \to ~p\,x_1,...,x_i ~\tilde{G}))$
with the formula $\tilde{G} \in \mathcal{F}$ satisfying
$\mbox{free}(\tilde{G}) \subseteq\{x_1,...,x_i\}\,.$
Note that the variables of $\tilde{G}$ are not involved in $B_S$.
For $\lambda_{1},\ldots,\lambda_{i} \in {\mathcal L}$ we 
can also write
$$\tilde{G}(\lambda_{1},\ldots,\lambda_{i}) 
= \tilde{G}\frac{\lambda_{1}}{x_{1}}\
\ldots\frac{\lambda_{i}}{x_{i}}\,,$$
provided that $x_1,...,x_i$ and the variables of $G$
are not involved in $\lambda_{1},\ldots,\lambda_{i}$.
Especially for $i=0$ we put 
$\tilde{G}(\lambda_{1},\ldots,\lambda_{i})  = \tilde{G}$.

\noindent
We have to show that 
\begin{equation*}
p \lambda_1,\ldots,\lambda_i \in \Pi_R(S;{\mathcal L})
\Rightarrow
\top \in V(\tilde{G}(\lambda_{1},\ldots,\lambda_{i}))
\end{equation*}
for all $\lambda_{1},\ldots,\lambda_{i} \in {\mathcal L}_*$, 
see Definition \ref{sl}.

\noindent
We will show that $\tilde{G}(\lambda_{1},\ldots,\lambda_{i})$
can be derived in $[M;{\mathcal L}]$ 
from the given proof $[\Lambda]=[F_1;\ldots;F_l]$ 
by using only Rules (a)-(d)
whenever $p \lambda_1,\ldots,\lambda_i \in \Pi_R(S;{\mathcal L})$
for $\lambda_{1},\ldots,\lambda_{i} \in {\mathcal L}_*$.
Then we can first apply Theorem \ref{simply} 
in order to obtain an extension of the proof $[\Lambda]$
which consists only on formulas in ${\mathcal F}$
and which contains the formula $\tilde{G}(\lambda_{1},\ldots,\lambda_{i})$
as a final step.
This will conclude the proof of the theorem because $[\Lambda]$ 
satisfies the induction hypothesis and Rules (a)-(d)
applied step by step on the extensions of $[\Lambda]$ 
with formulas in ${\mathcal F}$
can only produce further new formulas $F$ satisfying 
$\top \in V(\mbox{Gen}(F))$.\\

\noindent
Following our strategy we can construct an algorithm $\mathcal{A}$
with the following properties:
\begin{itemize}
\item[$\bullet$] $\mathcal{A}$ generates an infinite sequence
$R_1; R_2; R_3; \ldots$ of R-for\-mu\-las 
such that each finite part $[R_1; \ldots; R_n]$ with $n \in \setN$
is an R-derivation in $[S;{\mathcal L}]$. Note that $\mathcal{A}$
makes only use of the rules of inference (1.11)(a),(b),(c) in \cite{Ku}.
\item[$\bullet$] All elementary prime R-formulas in $\Pi_R(S;{\mathcal L})$
occur at least one time in the sequence $R_1; R_2; R_3; \ldots$.
\item[$\bullet$]  We suppose that $x_1,...,x_i$ and the variables of $G$
are not involved in $R_1; R_2; R_3;\ldots$, which is not a real restriction.
\end{itemize}

\noindent
Depending on $\mathcal{A}$ we define a second algorithm $\mathcal{B}$
with the following properties:
\begin{itemize}
\item[$\bullet$]  
Algorithm $\mathcal{B}$ generates a (finite or infinite) sequence
of formulas $F_1; F_2; F_3; \ldots$ in $[M;{\mathcal L}]$.
Each finite part $[F_1; \ldots; F_n]$ of the sequence
is a proof in $[M;{\mathcal L}]$. For $n>l$ algorithm
$\mathcal{B}$ makes only use of the rules of inference 
(3.13)(a)-(d) in \cite{Ku} in order to derive $F_n$.
\item[$\bullet$]  First of all we start with algorithm $\mathcal{B}$
and prescribe the formulas $F_1; F_2; \ldots;F_l$ in the proof
$[\Lambda]$. Next we extend $[\Lambda]$ to a proof $[\Lambda_0]$
by applying only Rule (c)
a finite number of times in order to substitute all variables
$x_{i+1},\ldots,x_m$ by $\tilde{\lambda}_{i+1},\ldots,\tilde{\lambda}_{m}$
in the formulas $F' \in [\Lambda]$ 
for all R-formulas $F \in B_S$ 
for which $p$ occurs $i-$ary in the R-conclusion of $F$.
After the construction of $[\Lambda_0]$ we pause $\mathcal{B}$ 
and start $\mathcal{A}$.
\item[$\bullet$] Each time when $\mathcal{A}$ has generated
a prime R-formula $R_{\kappa}$ (including equations and with or without variables) we pause algorithm $\mathcal{A}$ and activate 
algorithm $\mathcal{B}$ to generate $R_{\kappa}$ as well
in the sequence of formulas $F_1; F_2; F_3; \ldots$.
Moreover, if $R_{\kappa} = p \, \lambda_1,...,\lambda_i$
with lists $\lambda_1,...,\lambda_i \in {\mathcal L}$,
then $\mathcal{B}$ will also generate the formula 
$\tilde{G}(\lambda_1,...,\lambda_i)$ in a finite number of steps.
Afterwards we pause algorithm $\mathcal{B}$ and activate 
algorithm $\mathcal{A}$ again, and so on.
\end{itemize}

\noindent
It is clear that any R-derivation in $[S;{\mathcal L}]$
can also be performed in $[M;{\mathcal L}]$.
To prove that algorithm $\mathcal{B}$ is well defined we have
to show that it is able to generate the formulas 
$\tilde{G}(\lambda_1,...,\lambda_i)$ once algorithm $\mathcal{A}$ 
has produced the next prime formula of the form
$p \, \lambda_1,...,\lambda_i$. This will be explained now.

\noindent
Let $F$ be any R-formula in $[S;{\mathcal L}]$ 
and suppose that $x_1,\ldots,x_i$ and 
the variables of $G$ are not involved in $F$.
To $F$ there corresponds exactly one formula 
$\hat{F} \in {\mathcal F}$ of $[M;{\mathcal L}]$ 
which results if we replace in $F$ each 
$i-$ary subformula $p \, \lambda_1,...,\lambda_i$
with $\lambda_1,...,\lambda_i \in {\mathcal L}$ by the formula 
$\tilde{G}(\lambda_1,...,\lambda_i)$.
\begin{footnote}
{We have $\hat{F}=F$ if $p$ does not occur $i$-ary in $F$.}
\end{footnote}
We have assumed that the variables of $G$ are not involved in $B_S$.
Then we obtain that $\hat{F}$ is a step of the extended proof 
$[\Lambda_0]$ for all R-formulas $F \in B_S$ 
for which $p$ occurs $i-$ary in the R-conclusion of $F$.

\noindent
Beside the axioms $F$ in $B_S$ for which $p$ occurs $i-$ary in the R-conclusion of $F$ algorithm $\mathcal{A}$ can also make use of the R-axioms of equa\-lity (1.9)(c) with $n=i$ in order to deduce prime R-formulas
$p\,\lambda_1,...,\lambda_i$ in $[S;{\mathcal L}]$ 
from equations and these R-axioms. Let 
\begin{equation*}
F = ~\to ~\sim y_1,y'_1 \ldots \to ~\sim y_i,y'_i
\to ~p\,y_1,\ldots,y_i \, ~p\,y'_1,\ldots,y'_i
\end{equation*}
be such an R-axiom of equality with variables 
$y_k,y'_k \in X$. We suppose that $x_1,...,x_i$ 
and the variables of $G$ are not involved in $F$.
Then we infer in $[M;{\mathcal L}]$ the formula
\begin{equation*}
\begin{split}
\qquad \hat{F} = ~\to ~\sim y_1,y'_1 \ldots \to ~\sim y_i,y'_i
\to ~\tilde{G}(y_1,\ldots,y_i) \, \tilde{G}(y'_1,\ldots,y'_i)\,.
\end{split}
\end{equation*}
That this is possible can be seen by using \cite[(4.9) Corollary]{Ku} combined with
the Deduction Theorem \cite[(4.5)]{Ku}, by using the axioms of equa\-lity and the Equivalence Theorem \cite[Theorem (3.17)(a)]{Ku}.
This will only require the use of the Rules (a)-(d).

\noindent
Let $R_{\lambda}$ be any R-axiom 
generated by algorithm $\mathcal{A}$ and assume
that $p$ occurs $i$-ary in the R-conclusion of $R_{\lambda}$.
Then we summarize and keep in mind that 
we can derive the corresponding formula $\hat{R}_{\lambda}$
in $[M;{\mathcal L}]$ from $[\Lambda]$ by using only Rules (a)-(d).

\noindent
Initially $\mathcal{B}$ generates $[\Lambda_0]$.
We consider a finite part $R_1; \ldots; R_{\alpha}$
of the R-formulas from $\Pi_R(S;{\mathcal L})$ 
generated by the algorithm $\mathcal{A}$,
and we assume that $R_{\alpha}$ is a prime R-formula.
Then we activate algorithm $\mathcal{B}$ and 
proceed with a further expansion 
$F_1; \ldots; F_{\beta}$ of the list of formulas
from $\Pi(M;{\mathcal L})$
until we have derived $R_{\alpha}$ and $F_{\beta}=\hat{R}_{\alpha}$.
This can be achieved if $\mathcal{B}$ mimics the R-derivation
$R_1; \ldots; R_{\alpha}$ in the following way:
\begin{itemize}
\item[$\bullet$] For any R-axiom $R_{\lambda}$ with $\lambda \leq \alpha$
algorithm $\mathcal{B}$ generates the formula
$R_{\lambda}$ as well. If $p$ occurs $i$-ary in the R-conclusion 
of $R_{\lambda}$ then $\mathcal{B}$ generates $\hat{R}_{\lambda}$.
\item[$\bullet$] Suppose that the R-formula $R_{\lambda}$
was derived from the prime R-formula $R_{\kappa}$ and
the R-formula $\to R_{\kappa} R_{\lambda}$ with Rule (b),
$\kappa < \lambda \leq \alpha$.
Then algorithm $\mathcal{B}$ derives the formula $R_{\lambda}$ as well. 
Suppose that $\hat{R}_{\kappa}$ and
$\to \hat{R}_{\kappa} \hat{R}_{\lambda}$ were already derived
by $\mathcal{B}$, but not $\hat{R}_{\lambda}$. Then $\mathcal{B}$ generates 
$\hat{R}_{\lambda}$ from Rule (b).
\item[$\bullet$]  Assume that the R-formula 
$R_{\lambda}=R_{\kappa}\frac{\nu}{x}$ with $x \in X$ and
$\nu \in {\mathcal L}$
was derived from the R-formula $R_{\kappa}$ with Rule (c),
$\kappa < \lambda \leq \alpha$.
Then algorithm $\mathcal{B}$ derives 
the formula $R_{\lambda}$ as well. Suppose that $\hat{R}_{\kappa}$ was already derived by $\mathcal{B}$, but not $\hat{R}_{\lambda}$.
Then $\mathcal{B}$ generates 
$\hat{R}_{\lambda}=\hat{R}_{\kappa}\frac{\nu}{x}$ from Rule (c).
\end{itemize}
\end{itemize}
\end{proof}
\noindent
Under the mild additional condition ${\mathcal L}_* \neq \emptyset$ we have proved a slightly more general version of \cite[(5.4) Conjecture]{Ku},
namely the following theorem, which makes use of Definition \ref{sl}:\\

\begin{thm}\label{conjecture}
Let $M=[S;A_M;P_M;B_M]$ be a mathematical system with an underlying recursive
system $S=[A_S;P_S;B_S]$ such that $A_M = A_S$, $P_M = P_S$, $B_M = B_S$. 
Suppose that $[M;{\mathcal L}]$ is a mathematical system 
with restricted argument lists in ${\mathcal L}$
and that ${\mathcal L}$ is enumerable\,.
Let ${\mathcal L}_* \neq \emptyset$ be the set of all $A_S$-lists 
in ${\mathcal L}$ without variables.
\begin{itemize}
\item[i.] Let be $\lambda, \mu \in {\mathcal L}_*$. Then
$$\sim\,\lambda, \mu \in \Pi(M;{\mathcal L}) ~ \Leftrightarrow ~
\sim\,\lambda, \mu \in \Pi_R(S;{\mathcal L})\,.$$
\item[ii.] Let $p \in P_S$
and $\lambda_1,...,\lambda_i \in {\mathcal L}_*$ for $i \geq 0$ be elementary $A_S$-lists. Then
$$p\,\lambda_1,...,\lambda_i \in \Pi(M;{\mathcal L})~ \Leftrightarrow ~
p\,\lambda_1,...,\lambda_i \in \Pi_R(S;{\mathcal L})\,.$$
\end{itemize}
\end{thm}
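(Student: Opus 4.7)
The plan is to prove each equivalence by handling the two implications separately. The direction from right to left is the easy one and rests on the embedding of $S$ into $M$: every R-axiom of $S$ is simultaneously an axiom of $[M;\mathcal{L}]$, and the three R-rules of inference \cite[(1.11)(a),(b),(c)]{Ku} are absorbed by Rules (a), (b), (c) of \cite[(3.13)]{Ku}. Hence any R-derivation $[R_1;\ldots;R_n]$ in $[S;\mathcal{L}]$ is, verbatim, a proof in $[M;\mathcal{L}]$, and both $\sim\lambda,\mu \in \Pi_R(S;\mathcal{L})$ and $p\,\lambda_1,\ldots,\lambda_i \in \Pi_R(S;\mathcal{L})$ immediately give membership in $\Pi(M;\mathcal{L})$.

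For the nontrivial direction, from left to right, I would invoke Theorem \ref{mainthm}. Let $F$ stand for the prime formula in question, either $\sim\lambda,\mu$ or $p\,\lambda_1,\ldots,\lambda_i$, with all argument lists in $\mathcal{L}_*$. Since prime formulas contain none of the symbols $\exists, \vee, \&, \leftrightarrow$, we have $F \in \mathcal{F}$, and since $\mathcal{L}_*$ consists of variable-free lists, $F$ is a statement, so $\mbox{Free}(F)=[\,]$ and $\mbox{Gen}(F)=F$. Assuming $F \in \Pi(M;\mathcal{L})$, Theorem \ref{mainthm} (whose hypotheses ${\mathcal L}_* \neq \emptyset$ and $\mathcal{L}$ enumerable are exactly those of the present theorem) yields $\top \in V(\mbox{Gen}(F)) = V(F)$.

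The last step is to read off $F \in \Pi_R(S;\mathcal{L})$ from $\top \in V(F)$. This is built directly into clause 1 of the definition of the evaluation function $V$: for equations $\sim\lambda,\mu$ with $\lambda,\mu \in \mathcal{L}_*$ one has $V(\sim\lambda,\mu) = \{\top\}$ iff $\sim\lambda,\mu \in \Pi_R(S;\mathcal{L})$, and analogously for elementary prime formulas $p\,\lambda_1,\ldots,\lambda_i$ with $\lambda_1,\ldots,\lambda_i \in \mathcal{L}_*$. Thus $\top \in V(F)$ is, by definition, the same as $F \in \Pi_R(S;\mathcal{L})$, which closes the equivalence.

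The entire difficulty has already been concentrated in Theorem \ref{mainthm}; the present theorem is essentially the specialization of that theorem to prime statements, combined with the trivial observation about the base clause of $V$. Consequently I expect no genuine obstacle beyond checking that $F$ lies in $\mathcal{F}$ (so that Theorem \ref{mainthm} applies without first having to pass through the reduction $\Theta$ of Theorem \ref{simply}) and that $\mbox{Gen}$ is the identity on statements. The mild additional hypothesis ${\mathcal L}_* \neq \emptyset$ enters only to make the universal clause of $V$ well-defined, exactly as in Theorem \ref{mainthm}.
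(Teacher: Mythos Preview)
Your proposal is correct and matches the paper's approach: the paper does not give a separate proof of Theorem~\ref{conjecture} but simply presents it as what has already been established, namely the specialization of Theorem~\ref{mainthm} to variable-free prime formulas together with the base clause of $V$. Your write-up spells out exactly the details the paper leaves implicit, including the easy $\Leftarrow$ direction via the embedding of $[S;\mathcal{L}]$ into $[M;\mathcal{L}]$ and the observation that $\mbox{Gen}(F)=F$ for prime statements.
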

\begin{rem} We have assumed that 
${\mathcal L}_* \neq \emptyset$
in order to avoid trouble with the definition of the interpretation $V$
of the formulas $F \in {\mathcal F}_*$.
\end{rem}
From \cite[Section 5.3]{Ku} and Theorem \ref{conjecture}
we obtain a consistency proof for the following Peano arithmetic PA:
\noindent
Let $\tilde{S}$ be the recursive system $\tilde{S}=[\tilde{A};\tilde{P};\tilde{B}]$ 
where $\tilde{A}$, $\tilde{P}$ and $\tilde{B}$ are empty, 
and introduce the alphabets 
$A_{PA} = [\,0;\,s\,;\,+\,;\,*\,]$, $P_{PA} = [\,]$. 
We define the set ${\mathcal L}$ of
\underline{numeral terms} by the recursive definition

\begin{tabular}{llll}
(i)& $0$ and $x$ are numeral terms for any $x \in X$.&&\\
(ii)& If $\vartheta$ is a numeral term, then also $s(\vartheta)$.&&\\
(iii)& If $\vartheta_1$, $\vartheta_2$ are numeral terms,
then also $+(\vartheta_1 \vartheta_2)$ and $*(\vartheta_1 \vartheta_2)$.&&\\
\end{tabular}\\

We define the mathematical system $M'=[\tilde{S};A_{PA};P_{PA};B_{PA}]$ 
by giving the following basis axioms for $B_{PA}$
with distinct variables $x,y$

\begin{tabular}{llll}
($\alpha$)  & $\forall\,x ~ \sim +(0x),x$&&\\
($\beta$)  & $\forall\,x \, \forall\,y ~\sim +(s(x)y),s(+(xy))$&&\\
($\gamma$)  & $\forall\,x  ~ \sim *(0x),0$&&\\
($\delta$)  & $\forall\,x \, \forall\,y ~ \sim *(s(x)y),+(*(xy)y)$&&\\
($\varepsilon$) & $\forall\,x \, \forall\,y ~ \to ~ \sim s(x),s(y)\,~ \sim x,y$&&\\
($\zeta$) & $\forall x  ~\neg \sim s(x),0\,.$&&\\
\end{tabular}\\

\noindent
Moreover, for all formulas $F$ (with respect to $A_{PA}$ and $P_{PA}$)
which have only numeral argument lists, the following formulas
belong to $B_{PA}$ according to the Induction Scheme

\begin{tabular}{llll}
(IS) & $\to ~~ \forall\,x\,~ \& ~ \mbox{SbF}(F;0\,;x) ~\, 
   \to~ F\, \mbox{SbF}(F;s(x)\,;x)  ~~
   \forall\,x\,F$ \,.&&\\ 
\end{tabular}

\noindent
The system PA of \underline{Peano arithmetic} is given by
PA = $[M';{\mathcal L}]$, i.e. the argument lists of PA are restricted 
to the set ${\mathcal L}$ of numerals.
The Induction Rule (3.13)(e) is not used in PA since 
$\tilde{A}$, $\tilde{P}$ and $\tilde{B}$ are empty here
and since we are using the Induction Scheme (IS).
Let us define a recursive system $S=[A_S;P_S;B_S]$ as follows:

\noindent
We choose $A_S=A_{PA}=[\,0;\,s\,;\,+\,;\,*\,]$, $P_S=[\,N_0\,]$ and $B_S$ 
consisting of the basis R-axioms with distinct variables $x,y$\\

\begin{tabular}{llll}
(1)  & $N_0\,0$&&\\
(2)  & $\to ~ N_0\,x \,~ N_0\,s(x)$&&\\
(3)  & $\to ~ N_0\,x ~ \sim +(0x),x$&&\\
(4)  & $\to ~ N_0\,x ~ \to ~ N_0\,y\,~ \sim +(s(x)y),s(+(xy))$&&\\
(5)  & $\to ~ N_0\,x ~ \sim *(0x),0$&&\\
(6)  & $\to ~ N_0\,x ~ \to ~ N_0\,y\,~ \sim *(s(x)y),+(*(xy)y)$&&\\
(7) & $\to ~ N_0\,x ~ \to ~ N_0\,y ~ \to ~ \sim s(x),s(y)\,~ \sim x,y$\,.&&\\
\end{tabular}\\

\noindent
Using the results in \cite[Chapter (5.3)]{Ku} and Theorem \ref{conjecture} 
the inconsistency of PA would imply that there is
an elementary numeral term $\lambda$, i.e. a numeral term without variables, 
such that $N_0\,\lambda$ as well as $\sim\,s(\lambda),0$ are $R$-derivable in $S$, which is impossible.\\
 
\noindent
Instead of explaining these earlier results in detail again we will now derive a stronger result from Theorem \ref{mainthm}, namely 
%
\begin{thm}{\bf The $\omega$-consistency of the Peano arithmetic PA}
\label{pa_omega_consistency}\\
\noindent
Let $F$ be a formula in PA with $\mbox{free}(F)=\{x\}$, i.e. let $x \in X$ be the only free variable of $F$. Suppose that $\neg F \frac{\lambda}{x}$ is provable in PA for all elementary 
numeral terms $\lambda$. Then $\exists x F$ is not provable in PA.
\end{thm}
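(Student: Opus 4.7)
The plan is to derive a contradiction from the assumption that $\exists x\,F$ is provable in PA by applying Theorem \ref{mainthm} both to $\exists x\,F$ and to each $\neg F\frac{\lambda}{x}$. The set $\mathcal{L}$ of numeral terms is enumerable, and $\mathcal{L}_{*}$ contains the elementary numeral term $0$, so the hypotheses of Theorem \ref{mainthm} apply to PA.

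First I would normalize the formulas by means of the map $\Theta$ from Theorem \ref{simply}, which converts every proof in PA into a proof whose steps lie in $\mathcal{F}$ (and thus use only $\neg$, $\rightarrow$, $\forall$). In particular $\Theta(\exists x\,F) = \neg\,\forall x\,\neg\,\Theta(F)$ and $\Theta(\neg F\frac{\lambda}{x}) = \neg\,\Theta(F)\frac{\lambda}{x}$. The second equality uses that $\Theta$ commutes with substitution by an elementary numeral term (which contains no variables, so no collisions can occur); this is a straightforward induction on the structure of $F$ following the clauses in the definition of $\Theta$.

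Assume now, for contradiction, that $\exists x\,F$ is provable in PA. Then so is $\Theta(\exists x\,F)$, and since $\mbox{free}(F)=\{x\}$ the formula $\Theta(\exists x\,F)$ is a statement. Applying Theorem \ref{mainthm}, I would deduce $\top\in V(\neg\,\forall x\,\neg\,\Theta(F))$, which by clauses 2 and 4 of the definition of $V$ means that there exists some $\lambda\in\mathcal{L}_{*}$ with $\top\notin V(\neg\,\Theta(F)\frac{\lambda}{x})$, i.e. $\top\in V(\Theta(F)\frac{\lambda}{x}) = V(\Theta(F\frac{\lambda}{x}))$. On the other hand, the hypothesis provides a proof of $\neg F\frac{\lambda}{x}$ in PA for this particular $\lambda$; applying Theorem \ref{mainthm} once more (noting that $\neg F\frac{\lambda}{x}$ is itself a statement, hence $\mbox{Gen}$ is trivial) yields $\top\in V(\Theta(\neg F\frac{\lambda}{x})) = V(\neg\,\Theta(F\frac{\lambda}{x}))$, that is, $\top\notin V(\Theta(F\frac{\lambda}{x}))$. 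These two conclusions contradict each other, completing the proof.

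The potentially delicate step, and therefore the one I would write out with care, is the interaction of $\Theta$ with substitution by an elementary numeral term; everything else is a direct, mechanical application of Theorem \ref{mainthm} together with the two-clause recursive definition of $V$ on $\neg$ and $\forall$. Because $\lambda$ has no variables the collision-free condition is automatic, so the induction goes through smoothly, and no additional hypothesis on $\mathcal{L}$ beyond those already assumed in Theorem \ref{mainthm} is required.
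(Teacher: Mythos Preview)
Your argument has a genuine gap: Theorem~\ref{mainthm} does not apply to PA. The standing hypothesis at the beginning of Section~\ref{pa} (which Theorem~\ref{mainthm} inherits) is that the mathematical system $M=[S;A_M;P_M;B_M]$ satisfies $A_M=A_S$, $P_M=P_S$ and, crucially, $B_M=B_S$: the basis axioms of $M$ must coincide with the basis R-axioms of the underlying recursive system $S$. The interpretation $V$ and the entire proof of Theorem~\ref{mainthm} (in particular part~(a), which treats basis axioms as R-formulas and checks them via R-derivability) depend on this. But PA is defined with the trivial underlying recursive system $\tilde S=[\,[\,];[\,];[\,]\,]$, while $B_{PA}$ contains the quantified axioms $(\alpha)$--$(\varepsilon)$, the negated axiom $(\zeta)=\forall x\,\neg\sim s(x),0$, and the full induction scheme (IS). None of these are R-formulas, so $B_{PA}\neq\tilde B=\emptyset$ and the hypothesis fails outright. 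Applying Theorem~\ref{mainthm} ``to PA'' is therefore not legitimate, and the contradiction you derive is unfounded.

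This is precisely why the paper's proof is so elaborate. It first builds an auxiliary recursive system $S$ with the Horn-clause R-axioms (1)--(7) and the associated mathematical system $[M;\mathcal L]$ with $B_M=B_S$, to which Theorem~\ref{mainthm} \emph{does} apply. Step~1 then shows that the induction scheme is derivable in $[M;\mathcal L]$ via Rule~(e); Steps~2--3 transfer provability from PA through the relativized system $\mathrm{PA}_{N_0}$ (using \cite[Lemma~(5.2)(iii)]{Ku}) into $[M;\mathcal L]$ modulo the single extra statement $(*)$ encoding axiom $(\zeta)$; and only then, in Step~5, is Theorem~\ref{mainthm} invoked on $[M;\mathcal L]$, together with the separate verification that $V((*))=\{\top\}$. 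The transfer machinery and the handling of $(\zeta)$ are not cosmetic --- they are exactly what bridges the gap between PA and a system of the special form required by Theorem~\ref{mainthm}. Your proposal skips all of this, and without it the argument does not go through.
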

\begin{proof}
We assume that $\exists x F$ is provable in PA, i.e.
\begin{equation}\label{exf}
\exists x F \in  \Pi(PA)\,,
\end{equation}
and will show that this leads to a contradiction. 

\noindent
Step 1. We make use of the recursive system $S=[A_S;P_S;B_S]$ with the basis R-axioms 
(1)-(7) given above and define the mathematical system $M=[S;A_S;P_S;B_S]$. Recall the set ${\mathcal L}$ of numeral terms. Now $[M; {\mathcal L}]$ satisfies the conditions 
of Theorem \eqref{mainthm} mentioned at the beginning of this section.
We will show for \textit{all} formulas $H$ in $[M;{\mathcal L}]$ that 
\begin{equation}\label{inductions}
\to ~~ \forall\,x\,\to~ N_0\,x ~~ \& ~ H\frac{0}{x} ~ 
   \to\,H ~\,  H\frac{s(x)}{x} \quad
   \forall\,x\,\to~ N_0\,x ~ H
\end{equation}
is provable in $[M;{\mathcal L}]$, which 
is the induction principle for $[M;{\mathcal L}]$.
Without loss of generality we may assume that $\mbox{free}(H)=\{x, x_1,\ldots,x_m\}$
with disjoint variables $x, x_1,\ldots,x_m$ and $m \geq 0$.
Corresponding to the variables $x_1,\ldots,x_m$ we choose new 
and different constant symbols $c_1,\ldots,c_m$ and put
$$
\tilde{H}=H\frac{c_1}{x_1}\ldots\frac{c_m}{x_m}\,.
$$ 
We define $A =A_S \cup \{c_1,\ldots,c_m\}$, 
\[ \mathcal{L}_A = \{\, \lambda \frac{c_1}{y_1}...\frac{c_m}{y_m} \,\, : \,\, 
\lambda \in \mathcal{L}\,, y_1,\ldots,y_m \in X\,\}
\]
and the extension $M_A = [S; A; P_S; B_S]$.
Due to \cite[Definition (4.2)(d)]{Ku} and \cite[Corollary (4.9)(a)]{Ku}
we obtain a mathematical system $[M_A;\mathcal{L}_A]$ with argument lists
restricted to $\mathcal{L}_A$. We adjoin the statement
$$
\varphi =\forall\,x\,\to~ N_0\,x ~~ \& ~ \tilde{H}\frac{0}{x} ~ 
   \to\,\tilde{H} ~\,  \tilde{H}\frac{s(x)}{x}  
$$
to $[M_A;\mathcal{L}_A]$ and obtain the 
extended system $[M_A(\varphi);{\mathcal L}_A]$, see \cite[Definition (4.2)(a)]{Ku}. 
Now $\varphi$ is provable in $[M_A(\varphi);{\mathcal L}_A]$, and we obtain from
the quantifier axiom (3.11)(a) and the Modus Ponens Rule in \cite{Ku} that
\begin{equation}\label{varphi_ohne_all}
\to~ N_0\,x ~~ \& ~ \tilde{H}\frac{0}{x} ~ 
   \to\,\tilde{H} ~\,  \tilde{H}\frac{s(x)}{x} \in \Pi(M_A(\varphi);{\mathcal L}_A)\,.
\end{equation}
Let $u \in X$  be any variable which is neither involved in $H$ nor in $B_S$ and put	
\begin{equation}\label{Gabbr}
\tilde{G} = \& \, N_0 \, u ~ \tilde{H}\frac{u}{x} \,.
\end{equation}
In $[M_A(\varphi);{\mathcal L}_A]$ we obtain a proof 
containing the formula in \eqref{varphi_ohne_all}:
\begin{equation*}
\begin{split}
[\ldots;
& \to~ N_0\,x ~~ \& ~ \tilde{H}\frac{0}{x} ~ 
   \to\,\tilde{H} ~\,  \tilde{H}\frac{s(x)}{x}; \\
	&\to \quad \to~ N_0\,x ~~ \& ~ \tilde{H}\frac{0}{x} ~ 
   \to\,\tilde{H} ~\,  \tilde{H}\frac{s(x)}{x} ~ \to~ N_0\,x ~\tilde{H}\frac{0}{x}; \\
		&\to~ N_0\,x ~\tilde{H}\frac{0}{x}; ~
		\to~ N_0\,0 ~\tilde{H}\frac{0}{x}; ~
  N_0\,0; ~
\tilde{H}\frac{0}{x}; \\
& \to N_0\,0 ~ \to \tilde{H}\frac{0}{x} ~\& \,N_0 \, 0 ~ \tilde{H}\frac{0}{x}; \\
& \to \tilde{H}\frac{0}{x} ~ \& \, N_0 \, 0 ~ \tilde{H}\frac{0}{x}; \\
& \& \, N_0 \, 0 ~ \tilde{H}\frac{0}{x}; \\
& \to ~ N_0\,x ~ N_0 \,s(x);\\
&\to \quad \to~ N_0\,x ~~ \& \, \tilde{H}\frac{0}{x} ~ 
   \to\,\tilde{H} ~\,  \tilde{H}\frac{s(x)}{x} \\
& \to \quad \to ~ N_0\,x ~ N_0 \,s(x)\\
& \quad \quad \to \quad \& \, N_0 \, x ~ \tilde{H} \quad 
\& \, N_0 \, s(x) ~ \tilde{H}\frac{s(x)}{x};\\
& \to \quad \to ~ N_0\,x ~ N_0 \,s(x)\\
& \quad \quad \to \quad \& \, N_0 \, x ~ \tilde{H} \quad 
\& \, N_0 \, s(x) ~ \tilde{H}\frac{s(x)}{x};\\
& \quad \quad \to \quad \& \, N_0 \, x ~ \tilde{H} \quad 
\& \, N_0 \, s(x) ~ \tilde{H}\frac{s(x)}{x};\\
& \to \quad N_0\,u ~\tilde{G}\,]\,.\\
	\end{split}
\end{equation*}
The last step results from the Induction Rule (e), using the abbreviation $\tilde{G}$
in \eqref{Gabbr}. We see that the two formulas
$$
\to~ N_0\,x ~ \tilde{H} \quad \mbox{~and~} \quad \forall\,x\,\to~ N_0\,x ~ \tilde{H}
$$
are also provable in $[M_A(\varphi);{\mathcal L}_A]$. 
It follows from the Deduction Theorem \cite[(4.3)]{Ku} that the formula
$$
\to ~~ \forall\,x\,\to~ N_0\,x ~~ \& ~ \tilde{H}\frac{0}{x} ~ 
   \to\,\tilde{H} ~\,  \tilde{H}\frac{s(x)}{x} \quad
   \forall\,x\,\to~ N_0\,x ~ \tilde{H}
$$
is provable in $[M_A;{\mathcal L}_A]$. From the generalization of the 
constant symbols $c_1,\ldots c_m$ according to \cite[Corollary (4.9)(b)]{Ku} 
we see that the formula \eqref{inductions}
is provable in the original mathematical system $[M;{\mathcal L}]$.

\noindent
Step 2: Following \cite[Section 5]{Ku} we construct from PA a
related mathe\-matical system $PA_{N_0}=[M_{PA_{N_0}};\mathcal{L}]$ with argument lists restricted to the numerals $\mathcal{L}$ as follows:
We put $M_{PA_{N_0}}=[\tilde{S};A_S;P_S;B_{PA_{N_0}}]$ with the underlying recursive system 
$\tilde{S}=[\,[\,];[\,];[\,]\,]$, and recall that $A_S=[\,0;\,s\,;\,+\,;\,*\,]$, $P_S=[N_0]$.
The basis axioms $B_{PA_{N_0}}$ of PA$_{N_0}$ are given by the two formulas 
$N_0\,0$ and $\to~N_0\,x\,N_0\,s(x)$ with $x \in X$
and by all the formulas $\Gamma_{N_0}(G)\,\Psi_{N_0}(G)$, where G is any basis axiom of PA
(including the formulas from the induction scheme). 
Here $\Gamma_{N_0}(G)$ and $\Psi_{N_0}(G)$ are defined in \cite[Section 5]{Ku}
for every PA-formula $G$ as follows:
\begin{itemize}
\item We put $\Gamma_{N_0}(G)=~\to~N_0\,x_1~...~\to~N_0\,x_n$
for the block of $N_0$-premises with respect to $\mbox{free}(G)=\{x_1;\ldots;x_n\}$,
$x_1,\ldots,x_n$ ordered according to their first occurrence in $G$. 
For $n=0$ the string $\Gamma_{N_0}(G)$ is defined to be empty.
\item $\Psi_{N_0}(G)$ results from $G$ if we replace simultaneously in every subformula
$\forall z G'$ of $G$ the part $\forall z$ by $\forall z \rightarrow N_0 z$,
and in every subformula
$\exists z G'$ of $G$ the part $\exists z$ by $\exists z ~ \& ~ N_0 z$, with $z \in X$. 
\end{itemize}
Lemma (5.2)(iii) in \cite[Section 5]{Ku} states that $\Gamma_{N_0}(G)\,\Psi_{N_0}(G)$
is provable in PA$_{N_0}$ for every formula $G$ which is provable in PA.\\
From $\neg F\frac{\lambda}{x} \in \Pi(PA)$ for all 
$\lambda \in {\mathcal L}_*$ and from \eqref{exf} we see that
\begin{equation}\label{negpano}
\neg \Psi_{N_0}\left(F\frac{\lambda}{x}\right)= \neg \Psi_{N_0}(F)\frac{\lambda}{x}\in \Pi(PA_{N_0})\,,
\end{equation}
\begin{equation}\label{expano}
\exists x \, \& N_0\, x \, \Psi_{N_0}(F) \in \Pi(PA_{N_0})\,.
\end{equation}

\noindent
Step 3. To the mathematical system $[M;{\mathcal L}]$ we adjoin the single statement

\begin{tabular}{llll}
$(*)$ \qquad && $\forall x \, \to ~ N_0\,x ~\neg \sim s(x),0$&\\
\end{tabular}

\noindent
and obtain the mathematical system $M_{PA}=[M((*));{\mathcal L}]$.
We see from the first step that every formula which is provable in PA$_{N_0}$
is also provable in $M_{PA}$. It follows from \eqref{negpano}, \eqref{expano}
and the Deduction Theorem \cite[(4.3)]{Ku} for \textit{all} $\lambda \in {\mathcal L}_*$ that
\begin{equation}\label{negpanoml}
\to (*) ~ \neg \Psi_{N_0}(F)\frac{\lambda}{x}
\in \Pi(M;{\mathcal L})\,,
\end{equation}
\begin{equation}\label{expanoml}
\to (*) ~ \exists x \, \& N_0\, x \, \Psi_{N_0}(F) \in \Pi(M;{\mathcal L})\,.
\end{equation}

\noindent
Step 4. We extend the function $V$ to the set of all statements in $[M; {\mathcal L}]$.
Let $F_R$ be an elementary prime formula in $[M; {\mathcal L}]$.
Then $F_R$ is also an elementary prime R-formula in $[S; {\mathcal L}]$.
In this case we recall that $V(F_R)=\{\top\}$ iff $F_R$ is R-derivable in $[S; {\mathcal L}]$,
i.e. iff $F_R \in \Pi_R(S; {\mathcal L})$, and otherwise we have $V(F_R)=\emptyset$.
We put in addition for all statements $G, H$ of $[M;{\mathcal L}]$:
\begin{equation*}
\begin{split}
V(\neg G)&=\{\top\} \setminus V(G)\,,\\
V(\rightarrow G H) &= (\{\top\} \setminus V(G)) \cup V(H)\,,\\
V(\vee G H) &= V(G) \cup V(H)\,, \\
V(\& G H) &= \,V(G) \cap V(H)\,,\\
V(\leftrightarrow G H) &= V(\rightarrow G H) \cap V(\rightarrow H G)\,.\\
\end{split}
\end{equation*}
Recall the set ${\mathcal L}_*$ of all elementary numeral terms (without variables)
and let $G$ be a formula of $[M;{\mathcal L}]$ with $\mbox{free}(G) \subseteq \{z\}$.
Then $G\frac{\lambda}{z}$ is a statement for all $\lambda \in {\mathcal L}_*$, and we put
\begin{equation*}
\begin{split}
V(\forall\,z\, G) &=\bigcap \limits_{\lambda \in {\mathcal L}_*} 
V\left(G\frac{\lambda}{z}\right)\,,\\
V(\exists\,z\, G) &=\bigcup \limits_{\lambda \in {\mathcal L}_*} 
V\left(G\frac{\lambda}{z}\right)\,.\\
\end{split}
\end{equation*}
Recalling the definition of the function $\Theta$ 
we can use induction over well formed formulas and the Equivalence Theorem 
\cite[(3.17)(a) Theorem]{Ku} and obtain for all formulas $G$ of $[M;{\mathcal L}]$: 
\begin{equation}\label{theta_equiv}
\leftrightarrow G ~ \Theta(G) \in \Pi(M;{\mathcal L})\,.
\end{equation}
Next we define a degree for all formulas in $[M;{\mathcal L}]$.
We put $\mbox{deg}(G)=0$ for all prime formulas $G$.
For general formulas $G$, $H$ in $[M;{\mathcal L}]$ we put
$\mbox{deg}(\neg G)=\mbox{deg}(G)+1$, 
$$\mbox{deg}(J\, G \, H)=\max(\mbox{deg}(G),\mbox{deg}(H))+1 \,\mbox{~for~} \,
J \in \{\rightarrow; \vee; \& ;\leftrightarrow\}\,,$$
and for $z \in X$ we put
$$\mbox{deg}(\forall\,z\, G)=\mbox{deg}(\exists\,z\, G)=\mbox{deg}(G)+1 \,.$$
Induction over $n \in \setN_0$ with respect to $\mbox{deg}(G)\leq n$ 
gives for all \textit{statements} $G$ that
\begin{equation}\label{vstate}
V(\Theta(G))=V(G)\,.
\end{equation}
\noindent
Step 5. In the final step we apply Theorem \ref{mainthm},
\eqref{theta_equiv}, \eqref{vstate}
on the mathematical system $[M;{\mathcal L}]$ with the underlying recursive system $S$
and on the two statements in \eqref{negpanoml}, \eqref{expanoml}. We obtain
for all $\lambda \in {\mathcal L}_*$ that
\begin{equation}\label{negfin}
V \left(\to (*) ~ \neg \Psi_{N_0}(F)\frac{\lambda}{x}\right)=\{\top\}\,,
\end{equation}
\begin{equation}\label{exfin}
V \left(\to (*) ~ \exists x \, \& N_0\, x \, \Psi_{N_0}(F)\right)=\{\top\}\,.
\end{equation}
Using $N_0\,\lambda \in \Pi_R(S;{\mathcal L})$, 
$\sim s(\lambda),0 \notin \Pi_R(S;{\mathcal L})$ for all $\lambda \in {\mathcal L}_*$
we see $V \left((*)\right)=\{\top\}$\,.
We obtain from \eqref{exfin}
$$
\{\top\} = V  \left(\exists x \, \& N_0\, x \, \Psi_{N_0}(F)\right) 
=\bigcup \limits_{\lambda \in {\mathcal L}_*} 
V\left(\Psi_{N_0}(F)\frac{\lambda}{x}\right)\,.
$$
But \eqref{negfin} gives 
$\begin{displaystyle}
\emptyset =
V\left(\Psi_{N_0}(F)\frac{\lambda}{x}\right)
\end{displaystyle}$
for all $\lambda \in {\mathcal L}_*$\,, a contradiction.
\end{proof}

\subsection{A further example with formal induction}\label{question}\hfill\\
Finally we go back to the recursive system $S=[A;P;B]$
introduced in Section \ref{dual_recursive} with $A=[a;0;1]$, $P=[D]$ 
and the set $B$ consisting of the six basis R-axioms $(\alpha)$-$(\zeta)$. 
Let ${\mathcal L}$ be the set generated by the rules
\begin{itemize}
\item[(i)] $x \in \mathcal{L}$ for all $x \in X$,
\item[(ii)] $0 \in \mathcal{L}$, $1 \in \mathcal{L}$ and $a \in \mathcal{L}$,
\item[(iii)] If $\lambda, \mu \in \mathcal{L}$ then 
$\lambda \mu \in \mathcal{L}$.
\end{itemize} 
We define the mathematical system
$[M;{\mathcal L}]$ with $M=[S;A;P;B]$ and will show that the formula
$\begin{displaystyle}
\forall x \, \leftrightarrow \, D \, x \, \exists \,y\,D\,x,y
\end{displaystyle}$
is provable in $[M;{\mathcal L}]$.
We will present a short semi-formal proof. Due to Rule (d)
it is sufficient to show that
$\begin{displaystyle}
\leftrightarrow \, D \, x \, \exists \,y\,D\,x,y
\end{displaystyle}$
is provable in $[M;{\mathcal L}]$.
For this purpose we will apply the Induction Rule (e) twice to deduce 
$\begin{displaystyle}
\rightarrow \, \exists \,y\,D\,x,y ~ D \, x 
\end{displaystyle}$ as well
$\begin{displaystyle}
\rightarrow \, D \, x \, \exists \,y\,D\,x,y
\end{displaystyle}$
in $[M;{\mathcal L}]$.
Let $x,y,u,v \in X$ be distinct.
Due to Rule (a) the $R$-axioms in $B$ are provable in $[M;{\mathcal L}]$:
\begin{itemize}
\item[1.] $D\, 1$
\item[2.] $\to ~D\, x ~ D\, x0$
\item[3.] $\to ~D\, x ~ D\, x1$
\item[4.] $D\, 1,a$
\item[5.] $\to ~D\, x,y ~ D\, x0,yy$
\item[6.] $\to ~D\, x,y ~ D\, x1,yya$
\end{itemize}
For the first application of Rule (e) we put $p = D$, $i=2$, $x_1=u$, $x_2=v$ and
$G = D\,u$. In axioms 4.-6. we replace the prime subformulas $D\,\lambda_1,\lambda_2$
by $D\,\lambda_1$ and obtain axioms 1.-3. for the 1-ary predicate ``$D$".
Due to Rule (e) 
\begin{itemize}
\item[7.] $\to ~D\, u,v ~ D\, u$
\end{itemize}
is provable in $[M;{\mathcal L}]$, and also the formulas

\begin{itemize}
\item[8.] $\to ~\neg \, D\, x ~\, \neg \,D\, x,y$
\item[9.] $\forall y \, \to ~\neg \, D\, x ~\, \neg \,D\, x,y$ \quad from 8. $\&$ Rule (d)
\item[10.] $\to ~\neg \, D\, x ~\, \forall y \, \neg \,D\, x,y$ \quad with 9. $\&$ 
quantifier axiom (3.11)(b)
\item[11.] $\to  ~ \neg\, \forall y \, \neg \,D\, x,y ~\, D\, x$
\item[12.] $\to  ~\exists y \, D\, x,y ~\, D\, x$ \quad with 11. $\&$ quantifier axiom (3.11)(c)
\end{itemize}
This is the first implication. 

For the second one we deduce the following formulas in
$[M;{\mathcal L}]$:

\begin{itemize}
\item[13.] $\to ~\,D\, x,y ~\exists y \, D\, x,y$ \, from example 1 in Section \ref{general_valid}
\item[14.] $\to ~\,D\, 1,a ~\exists y \, D\, 1,y$ \quad from 13. and two times Rule (c)
\item[15.] $\exists y \, D\, 1,y$ \quad with 4. and 14.
\item[16.]  $\to ~\,D\, x0,y ~\exists y \, D\, x0,y$  \, from example 1 in Section \ref{general_valid}
\item[17.]  $\to ~\,D\, x0,yy ~\exists y \, D\, x0,y$ \,  from 16. and Rule (c)
\item[18.]  $\to ~\,D\, x,y ~\, \exists y \, D\, x0,y$ \, with 5. and 17.
\item[19.]  $\to ~\,\neg\,\exists y \, D\, x0,y ~\, \neg\, D\, x,y $  
\item[20.]  $\forall y\, \to ~\,\neg\,\exists y \, D\, x0,y ~\, \neg\, D\, x,y$ \, 
from 19. $\&$ Rule (d)
\item[21.]  $\to ~\,\neg\,\exists y \, D\, x0,y ~ \forall y\, \neg\, D\, x,y$  with 20. $\&$
quantifier axiom (3.11)(b)
\item[22.]  $\to ~\,\neg\,\forall y\, \neg\, D\, x,y ~\exists y \, D\, x0,y $  
\item[23.]  $\to ~\exists y\, D\, x,y ~\exists y \, D\, x0,y$ \, 
with 22. $\&$ quantifier axiom (3.11)(c)
\item[24.]  $\to ~\,D\, x1,y ~\exists y \, D\, x1,y$ \, from example 1 in Section \ref{general_valid}
\item[25.]  $\to ~\,D\, x1,yya ~\exists y \, D\, x1,y$ \, from 24. and Rule (c)
\item[26.]  $\to ~\,D\, x,y ~\, \exists y \, D\, x1,y$ \, with 6. and 25.
\item[27.]  $\to ~\,\neg\,\exists y \, D\, x1,y ~\, \neg\, D\, x,y $  
\item[28.]  $\forall y\, \to ~\,\neg\,\exists y \, D\, x1,y ~\, \neg\, D\, x,y$ \, 
from 27. $\&$ Rule (d)
\item[29.]  $\to ~\,\neg\,\exists y \, D\, x1,y ~ \forall y\, \neg\, D\, x,y$  with 28. $\&$
quantifier axiom (3.11)(b)
\item[30.]  $\to ~\,\neg\,\forall y\, \neg\, D\, x,y ~\exists y \, D\, x1,y $  
\item[31.]  $\to ~\exists y\, D\, x,y ~\exists y \, D\, x1,y$ \, 
with 30. $\&$ quantifier axiom (3.11)(c)
\end{itemize}
From formulas 15. 23., 31. and \cite[Theorem (3.17)(b)]{Ku}
we obtain that the formulas
\begin{itemize}
\item[32.]  $\exists v\, D\, 1,v$ 
\item[33.]  $\to ~\exists v\, D\, x,v ~\exists v \, D\, x0,v$ 
\item[34.]  $\to ~\exists v\, D\, x,v ~\exists v \, D\, x1,v$ 
\end{itemize}
are provable in $[M;{\mathcal L}]$.

\noindent
For the second application of Rule (e) we put $p = D$, $i=1$, $x_1=u$ and
$G = \exists v \, D\, u,v$. We replace the prime subformulas $D\,\lambda_1$
in axioms 1.-3. by $\exists v \, D\, \lambda_1,v$ 
and obtain formulas 32.-34., respectively.

\noindent
Due to Rule (e) we see that $\to \, D\,u \, \exists v\, D\, u,v$ and hence
$\begin{displaystyle}
\rightarrow \, D \, x \, \exists \,y\,D\,x,y
\end{displaystyle}$
are both provable in $[M;{\mathcal L}]$.

\section{Conclusions and outlook}\label{conclusions}
\noindent
We have presented contributions to elementary proof theory.
Especially in Section \ref{prime_formulas} we have determined a simple procedure in order to eli\-mi\-nate prime formulas from formal proofs 
which do not occur with a given arity in the basis axioms of a mathematical system. We also hope to develop a method in order to eliminate equations from formal proofs if there are no equations in the basis axioms.

\noindent
Our most important contribution is Theorem \ref{mainthm}, which is a general result
of mathematical logic concerning formal induction.
We have presented two applications of this theorem in Section \ref{pa}, 
namely the proof of \cite[(5.4) Conjecture]{Ku}, see Theo\-rem
\ref{conjecture}, and the $\omega$-consistency of the Peano arithmetic PA 
in Theorem \ref{pa_omega_consistency}. 

\noindent
It would be very interesting to create a computer program which is able to check semiformal
proofs like in Section \ref{question}. First a machine should be able to check fully formalized proofs
with certain restrictions. For example, the number of propositional variables 
in the axioms \cite[(3.9)]{Ku} must be small enough for an efficient calculation.
In a next step the program should be extended to analyze the use of the axioms and rules in order to develop further composed rules of inference, especially for the propositional calculus
and for the treatment of equations.

\noindent An advanced program should also make use of 
\cite[Theorem (3.17), Propositions (3.18),(3.19), Theorems (4.5),(4.8), Corollaries (4.9),(4.10)]{Ku}.


\begin{thebibliography}{99}

\bibitem{Ho} Hofstadter, D.R. ``{G\"odel, Escher, Bach. Ein Endloses Geflochtenes Band}", \"Ubersetzung: Philipp Wolff-Windegg 
und Hermann Feuersee. Neuausgabe: Klett-Cotta Verlag, Stuttgart (2006). 

\bibitem{Ku} Kunik, M.,
 ``{Formal mathematical systems including 
a structural induction principle}", 
A revised version of the 
Preprint Nr. 31/2002,
Fakult\"at f\"ur Mathematik,
Otto-von-Guericke-Universit\"at Magdeburg.
Available online, see arXiv:2005.04951 (2020).     
\bibitem{Sm} Smullyan, R.M.,  
    ``{Theory of formal systems}",
Annals of Math. Stud. No.\, ${\bf 47}$,
    Princeton Univ. Press (1961).
    
\end{thebibliography}
\end{document}